\newcommand{\CP}{\mathbb{CP}}
\newcommand{\C}{\mathbb{C}}
\newcommand{\CC}{\mathbb{C}}
\newcommand{\ZZ}{\mathbb{Z}}
\newcommand{\PP}{\mathbb{P}}
\newcommand{\FF}{\mathbb{F}}
\newcommand{\RR}{\mathbb{R}}
\newcommand{\Z}{\mathbb{Z}}
\newcommand{\A}{\mathcal{A}}
\newcommand{\LL}{\mathcal{L}}
\newcommand{\g}{\gamma}
\newtheorem{thm}{Theorem}[section]
\newtheorem*{thm*}{Theorem}
\newtheorem{corollary}[thm]{Corollary}
\newtheorem{lemma}[thm]{Lemma}
\newtheorem{properties}[thm]{Property}
\newtheorem*{lemma*}{Lemma}
\newtheorem{definition}[thm]{Definition}
\newtheorem{prs}[thm]{Proposition}
\newtheorem{remark}[thm]{Remark}
\newtheorem{assumption}[thm]{Assumption}
\newtheorem{notation}[thm]{Notation}
\newtheorem*{notation*}{Notation}
\begin{document}
\title [Structure of fundamental groups of CL arrangements]{On the structure of fundamental groups of conic--line arrangements having a cycle in their graph}

\author{Michael Friedman and David Garber}

\address{Michael Friedman, Institut Fourier, 100 rue des maths, BP 74, 38402 St Martin d'H\'eres cedex, France; Max Planck Institute for Mathematics, Vivatsgasse 7, 53111 Bonn, Germany}
\email{Michael.Friedman@ujf-grenoble.frþ}

\address{David Garber, Department of Applied Mathematics, Faculty of
  Sciences, Holon Institute of Technology, 52 Golomb st., PO
  Box 305, 58102 Holon, Israel}
\email{garber@hit.ac.il}

\begin{abstract}
The fundamental group of the complement of a plane curve is a very important topological invariant.
In particular, it is interesting to find out whether this group is determined by the combinatorics of the curve or not, and whether
it is a direct sum of free groups and a free abelian group, or it has a conjugation-free geometric presentation.

In this paper, we investigate the structure of this fundamental group when the graph of the conic-line arrangement is a unique cycle of length $n$ and the conic passes through all the multiple points of the cycle. We show that if
 $n$ is odd, then the affine fundamental group is abelian but not conjugation-free. For the even case, if $n>4$, then using quotients of the lower central series, we show that the fundamental group is not even a direct sum of a free abelian  group and free groups.

\end{abstract}

\maketitle


\section{Introduction}

The fundamental group of the complement of a plane curve is a very important topological invariant.
For example, it is used to distinguish between curves that form a Zariski pair, which is a pair of curves having the same combinatorics but
non-homeomorphic complements in $\CC\PP^2$ (see \cite{AB-CR} for the exact definition and \cite{ABCT} for a survey). Moreover, the Zariski-Lefschetz hyperplane section theorem (see \cite{milnor})
states that
$\pi_1 (\CC\PP^N - S) \cong \pi_1 (H - (H \cap S)),$
where $S$ is a hypersurface  in $\CC\PP^N$ and $H$ is a generic 2-plane.
Since $H \cap S$ is a plane curve, the fundamental groups of complements of plane curves
can also be used for computing the fundamental groups of complements of hypersurfaces.
Note that when $S$ is a hyperplane arrangement, $H \cap S$ is a line arrangement in $\CC\PP^2$. Thus, one of the main tools for investigating the topology of hyperplane arrangements is the fundamental groups $ \bar{G} = \pi_1(\CC\PP^2- \LL)$ and $G = \pi_1(\CC^2 - \LL)$, where $\LL$ is an arrangement of lines.

One of the main questions arising in the research of hyperplane arrangements  is how does the combinatorics - in this case, the intersection lattice - of such an arrangement determine the fundamental group $G$ or the quotients, for example, of its lower central series $G_i/G_{i+1}$ (where $G_1 = G$ and $G_i = [G_{i-1},G]$). For example, when does the arrangement have a conjugation-free geometric presentation for its fundamental group? Also, it is well-known that for line arrangements, $G/G_2$,
$G/G_3$ and $G_2/G_3$ are determined by the combinatorics (see Section \ref{subsecCertainsQuotient}) and in fact Falk \cite{Falk} has shown that the rank of the quotients $G_i/G_{i+1}$ is also determined by the combinatorics. However, as Rybnikov shows \cite{R}, the quotient $G/G_4$ is not determined by the combinatorics, at least for complex arrangements.

These questions lead us to investigate the situation in the simplest generalization of arrangements of lines: conic--line (CL) arrangements. Indeed, some families of CL arrangements were studied by Amram et al. (see e.g. \cite{AT3,AT2} and especially \cite[Theorem 6]{AT1}). We also showed in  \cite{FG} that the combinatorics  of some families of real CL arrangements determines the structure of the corresponding fundamental group $G$ and that $G$ is conjugation-free. However, the family $\A_n$,  where the graph of the arrangement is a cycle of length $n$ and the conic passes through all the vertices of the graph, poses problems: not only that these arrangements are not conjugation-free (at least for odd $n$), but one has to differentiate between  cycles whose lengths have different parity.

In this paper, we give a complete description of the affine fundamental group $\pi_1(\CC^2 - \A_n)$ for the case of odd $n$: in this case, the fundamental group is abelian but not conjugation-free. For the case of
even $n$, we prove that the fundamental group is not abelian and not a direct sum of a free abelian  group and free groups. The last statement is proven by studying the groups $G_2/G_3$, $G/G_3$ and $Z(G/G_3)$.

\medskip

   The paper is organized as follows. In Section \ref{secLineArr}, we survey the known results on line arrangements, the conjugation-free property and certain quotients of the fundamental group arising from the lower central series. In Section \ref{secCycle34}, we examine two special cases, when the arrangement is as above and the  cycle is of  length $3$ or $4$, and in Section \ref{secGenGraph} we prove the main result:  while for odd $n$, the fundamental group is abelian and not conjugation-free, for even $n>4$, the fundamental group is not a sum of a free abelian group and free  groups.
 \medskip

{\textbf{Acknowledgements}}: We would like to thank Arkadius Kalka, Meital Eliyahu and Uzi Vishne for stimulating talks.

 The first author would like to thank the Max-Planck-Institute f\"ur Mathematik in Bonn for the warm hospitality and support and the Fourier Institut in Grenoble, where the final part of this paper was carried out.

\section{Arrangements and the conjugation-free property} \label{secLineArr}

In this section, we give a short survey of the known results concerning the structure of the fundamental group of the complement of  line arrangements and conic-line arrangements, while mentioning also  the conjugation-free property.

\subsection{Arrangements and their associated graphs}

An {\it affine line arrangement} in $\CC^2$ is a union of copies of $\C^1$ in $\C^2$. Such an arrangement is called {\em real} if the defining equations of all its lines can be written with real coefficients, and {\em complex} otherwise.

For a real or complex line arrangement $\mathcal L$, Fan \cite{Fa2} defined a graph $G(\mathcal{L})$ which is associated to the multiple points of $\LL$ (i.e. points where more than two lines are intersected). We give here its version for real arrangements (the general version is more delicate to explain and will be omitted): Given a real line arrangement $\mathcal L$, the graph $G(\mathcal{L})$ lies on the real part of $\mathcal L$. Its vertices are  the multiple points of $\mathcal{L}$ and its edges are  the
segments between the multiple points on lines which have at least two multiple points. Note that if the arrangement consists of three
multiple points on the same line, then $G(\mathcal{L})$ has three vertices on the same edge (see Figure \ref{graph_GL}(a)).
If two such lines happen to intersect in a \emph{simple} point (i.e. a point where exactly two lines are intersected), it is ignored
(i.e. there is no corresponding vertex in the graph). See another example in Figure \ref{graph_GL}(b) (note that Fan's definition gives a graph slightly different from the graph defined in \cite {JY,WY}).

\begin{figure}[!ht]
\epsfysize 4cm
\centerline{\epsfbox{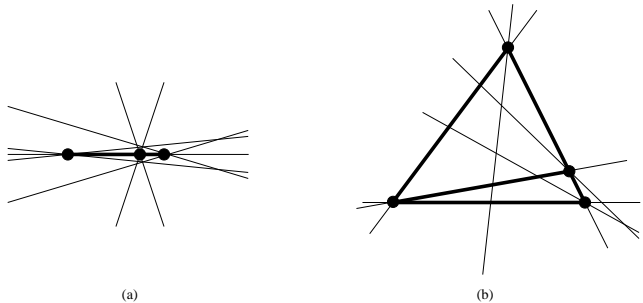}}
\caption{Examples for the graph $G(\mathcal L)$.}\label{graph_GL}
\end{figure}

\medskip

Fan \cite{Fa1,Fa2} proved that given  a complex line arrangement $\mathcal L$, if the graph $G(\mathcal L)$ has no cycles, then
$\pi_1 (\CC \PP ^2 - \mathcal L)$ is isomorphic to a direct sum of a free abelian group and free groups.
Eliyahu et al. \cite{ELST} proved the inverse direction to Fan's result (which was conjectured by Fan \cite{Fa2}), i.e. if the fundamental group of the arrangement is a direct sum of free groups and a free abelian group, then the graph $G(\mathcal L)$ has no cycles.

\medskip
We now turn to {\it real conic-line arrangements}.

\begin{definition} \label{defCLarr}
A {\em real conic-line (CL) arrangement} $\A$ is a union of conics and lines in $\CC^2$, where all the conics and the lines are defined over $\RR$ and every singular point (with respect to a generic projection) of the arrangement is in $\RR^2$. In addition, for every conic $C$, $C \cap \RR^2$  is not an empty set, neither a point nor a (double) line.
\end{definition}

Moreover, we assume from now on the following assumption:
\begin{assumption}\label{assume}
Let  $\A$ be a real CL arrangement. Then, for each pair of components $\ell_1,\ell_2$ of $\A$, $\ell_1$ and $\ell_2$ intersect transversally (i.e. the intersection multiplicity of $\ell_1,\ell_2$ is 1 at each intersection point).
\end{assumption}

For example, a tangency point is not permitted.

Similar to Fan's graph associated to line arrangements, one can associate the following graph to a real CL arrangement:

 \begin{definition}\label{defGraph}\emph{
\emph{The graph} $G(\A)$ for a real CL arrangement $\A$ is defined as follows: its vertices  will be the multiple points (with multiplicity larger than 2), and its edges will be the segments {\it on the lines} connecting these points if two such points are  on the same line
(see an example in Figure \ref{graphEx}).}
\end{definition}

\begin{figure}[!ht]
\epsfysize 4cm
\centerline{\epsfbox{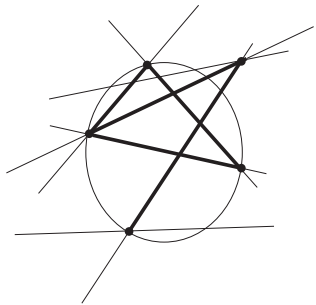}}
\caption{An example for the graph $G(\mathcal A)$ associated to  a CL arrangement
$\mathcal A$.}\label{graphEx}
\end{figure}

\medskip

In \cite{FG}, we have proved the equivalence  to Fan's result regarding real CL arrangements with one conic.

\begin{thm} \label{thmRealCLarrBetaZero}
Let $\A$ be a real CL arrangement with one conic and $k$ lines, such that $\beta (\A)=0$, where $\beta (\A)$ is the first Betti number of the associated graph $G(\mathcal A)$ (hence $\beta (\mathcal A)=0$
means that the graph $G(\mathcal A)$ has no cycles). Then $\pi_1 (\CC^2  - \mathcal A)$
 is isomorphic to a direct sum of a free abelian group and free groups. In addition, if the arrangement consists only of nodes and triple points and all the triple points are on the conic, then the corresponding fundamental
 group is abelian.
\end{thm}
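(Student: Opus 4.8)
The plan is to compute $\pi_1(\CC^2 - \A)$ by the Zariski--van Kampen method via braid monodromy. Fix a generic projection $\CC^2 \to \CC$; since $\A$ is real and satisfies Assumption \ref{assume} (transversality), all of its singular points are real and the braid monodromy can be read directly from the real picture. A generic fiber meets $\A$ in $d = k+2$ points, yielding meridional generators $x_1,\dots,x_d$ of a free group, and the relations of $\pi_1(\CC^2 - \A)$ arise from the local braid monodromy at the singular points: a node between two strands contributes a commutator relation, a multiple point of multiplicity $m$ contributes the standard cyclic multiple-point relations among its $m$ meridians (whose local group is $\ZZ \oplus F_{m-1}$), and the two vertical tangency points of the conic contribute the relations forcing the two meridians of the conic to be conjugate.

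First I would set up an induction on the number of vertices (equivalently edges) of $G(\A)$. Because $\beta(\A)=0$, the graph is a forest, hence has a leaf: either a multiple point joined to the rest of the graph by at most one edge, or a line carrying a single multiple point. I would delete the corresponding line $\ell$ to obtain a sub-arrangement $\A' = \A \setminus \ell$ whose graph is again a forest, apply the inductive hypothesis to decompose $\pi_1(\CC^2 - \A')$ as a direct sum of a free abelian group and free groups, and then analyze how restoring $\ell$ changes the group. The transversality and the leaf condition guarantee that $\ell$ meets $\A'$ either only in nodes (contributing a free abelian generator that commutes with everything) or in a single multiple point (contributing a free factor $F_{m-1}$). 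The heart of the matter is to show that the added generator splits off as a genuine direct summand; acyclicity is precisely what rules out the conjugating words that a cycle would otherwise force into the relations, so that the resulting geometric presentation becomes conjugation-free and the group is the claimed direct sum.

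The main obstacle is controlling the conic. Under the projection the conic has two branch points, and near a multiple point lying on the conic one branch of the local singularity is the quadratic conic rather than a line. I expect the hardest step to be verifying that these branch-point relations, together with the multiple-point relations along the conic, introduce no essential conjugations once the graph is acyclic: concretely, that the local group at a point where the conic meets $m-1$ lines transversally is still $\ZZ \oplus F_{m-1}$, and that the conic's two meridians, although a priori only conjugate, can be chosen equal in the acyclic situation. One clean way to organize this is a comparison argument reducing to Fan's theorem for line arrangements (already quoted above): one treats the conic's contribution as a controlled perturbation of a pair of lines, checking that the braid monodromy changes only by the two extra branch-point braids, which are absorbed by the existing free and abelian summands when $\beta(\A)=0$.

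Finally, for the addendum I would specialize to the case in which every multiple point is a triple point lying on the conic. Here each such triple point involves the conic and two lines, and I would show that its multiple-point relations collapse to pairwise commutators; combined with the node commutators, this makes all meridians commute. Since $H_1(\CC^2 - \A) \cong \ZZ^{k+1}$, abelianity then forces $\pi_1(\CC^2 - \A) \cong \ZZ^{k+1}$. The delicate point remains the triple points on the conic: one must confirm that the quadratic branch does not produce a surviving conjugation, which is exactly where the acyclicity of $G(\A)$ is used.
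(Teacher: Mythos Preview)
This theorem is not proved in the present paper: it is quoted from \cite{FG}, as the sentence immediately preceding the statement makes clear. So there is no proof here to compare against; what the paper does supply are the two companion results Proposition~\ref{prsCFless1}(2) and Proposition~\ref{lemAddLineComProof}(2), both also cited from \cite{FG}, which together indicate that the actual argument in \cite{FG} proceeds by first establishing a \emph{conjugation-free geometric presentation} for $\pi_1(\CC^2-\A)$ via an induction on lines (adding one line through a single singular point at a time, which the forest hypothesis allows), and then reading off the direct-sum decomposition from that presentation.

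Your outline is compatible with that strategy, but as written it is a plan rather than a proof, and the crucial step is left open. You correctly isolate the real difficulty --- controlling the two branch-point relations of the conic and showing that no essential conjugation survives in the multiple-point relations when $\beta(\A)=0$ --- but you do not actually carry it out. The suggestion to ``treat the conic's contribution as a controlled perturbation of a pair of lines'' and to invoke Fan's theorem is not a reduction that works without further argument: the branch-point braids do \emph{not} in general get ``absorbed'' by existing summands (indeed, the whole point of Sections~\ref{secCycle34}--\ref{secGenGraph} of this paper is that the second branch-point relation can carry nontrivial information, yielding either an extra commutator for odd $n$ or a redundant relation for even $n$). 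What is needed is an explicit braid-monodromy computation showing that, under $\beta(\A)=0$, the skeletons of all singular points can be simplified to the conjugation-free form of Definition~\ref{CFGP-CLArr1}; that is the content of Proposition~\ref{prsCFless1}(2) and the substance of \cite{FG}. Your inductive framework is the right shape, but the missing ingredient is precisely that conjugation-freeness lemma, which you assert rather than prove.
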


Note that while for line arrangements the inverse direction (i.e. such a structure of the fundamental group implies that the associated graph has no cycles) is correct, for CL arrangements it is not true anymore. For example, take three generic lines and a circle passing through the three intersection points (see Figure \ref{3linesConic} below). The fundamental group of the complement of this arrangement is abelian (see \cite{Deg} and also Theorem \ref{prsG3G4}(a) below), although $\beta(\A)= 1>0$. We generalize this phenomenon here.

\subsection{Conjugation-free property}

Recall that for computing the fundamental group of a complement of a curve $C$ in $\C^2$, we use the Zariski-van Kampen thereom \cite{vK}. This theorem uses a generic projection $\pi : \C^2 \to \ell = \C^1$ (or a projection $\pi : \C\PP^2 \to  \C\PP^1$ with a center $O$) to a generic line $\ell$ (also called the \emph{reference line}) in order to induce the geometric generators in the fiber
$\C^1_p = \pi^{-1}(p)$, where $p$ is a generic point in $\ell$. These generators also generate $\pi_1 (\CC ^2 - C)$.

Using these notations,
we recall the notion of a {\it conjugation-free geometric presentation} for the fundamental group of line and CL arrangements (see \cite{EGT1,FG}):
\begin{definition}\label{CFGP-CLArr1}
Let $G$ be the fundamental group of the affine or projective complements of a real CL  arrangement with $k$ lines and $n$ conics (where $k>0$ and $n \geq 0$). We say that $G$ has {\em a conjugation-free geometric presentation} if $G$ has a presentation with the following properties:
\begin{itemize}
\item In the affine case, the generators $\{ x_1,\dots, x_{k+2n} \}$ are the meridians of lines and conics at $\C^1_p$, and therefore there are $k+2n$ generators.
\item In the projective case, the generators are the meridians of lines and conics at $\C^1_p = \pi^{-1}(p)$ except for one, and therefore there are  $k+2n - 1$ generators.
\item In both cases, the induced relations are of the following types:
$$x_{i_t} x_{i_{t-1}} \cdots x_{i_1} = x_{i_{t-1}} \cdots x_{i_1} x_{i_t} = \cdots = x_{i_1} x_{i_t} \cdots x_{i_2}$$
induced by an intersection point of multiplicity $t$, or
$$x_{i_1}=x_{i_2},$$
induced by a branch point,
where $\{ i_1,i_2, \dots , i_t \} \subseteq \{1, \dots, m \}$ is an increasing subsequence of indices,
where $m=k+2n$ in the affine case and $m=k+2n-1$ in the projective case. Note that if $t=2$ in the first type, we get the usual commutator.
\item In the projective case, we have an extra relation that a specific multiplication of all the generators is equal to the identity element.
\end{itemize}
Note that in each case we claim that with respect to  particular choices of the reference line $\ell$ (i.e. the line to which we project the arrangement), the point $p$ (the basepoint for both  the meridians in the fiber $\CC^1_p$ and the loops in the group $\pi_1(\ell-N,p)$)
and the projection point $O$, we have this conjugation-free property.
\end{definition}

\begin{remark}  \label{remProveLater}\emph{
In the model we work with, the reference line is $\ell = \{y = a\}, a\ll 0$, where $\ell$ is chosen to be below all the real singular points of the arrangement, the projection in $\CC^2$ is $(x,y) \rightarrow x$ (i.e. in $\CP^2$, the point $O$ is $(1:0:0)$) and the basepoint $p \in \ell$ is real.}
\end{remark}

\begin{remark}\label{RemRelCyclic}
\emph{The relation of the form
$$x_{i_t}^{s_t} x_{i_{t-1}}^{s_{t-1}} \cdots x_{i_1}^{s_1} = x_{i_{t-1}}^{s_{t-1}} \cdots x_{i_1}^{s_1} x_{i_t}^{s_t} = \cdots = x_{i_1}^{s_1} x_{i_t}^{s_t} \cdots x_{i_2}^{s_2},$$
induced by an intersection point of multiplicity $t$, is called \emph{a cyclic relation of length} $t$ (where $s_i \in \langle x_1,\ldots,x_n \rangle$), and is abbreviated as
$$ [x_{i_1}^{s_1},\ldots,x_{i_t}^{s_t}] = e.$$
Note that a cyclic relation of length $t$ can be written as a list of $t-1$ commutative relations:
$$
[x_{i_k}^{s_k}, x_{i_{k-1}}^{s_{k-1}} \cdots x_{i_1}^{s_1} \cdot x_{i_t}^{s_t} \cdots x_{i_{k+1}}^{s_{k+1}}] = e,
$$
where $1 \leq k \leq t$.}
\end{remark}

We recall the following propositions from \cite{EGT2} and \cite{FG}:

\begin{prs} \label{prsCFless1}
(1) Let $\LL$ be a real line arrangement satisfying $\beta(\LL) \leq 1$. Then, $\pi_1(\CC^2-\LL)$ has a conjugation-free geometric presentation \cite{EGT2}.

(2) Let $\A$ be a real CL arrangement satisfying $\beta(\A) =0$. Then, $\pi_1(\CC^2-\A)$ has a conjugation-free geometric presentation \cite{FG}.
\end{prs}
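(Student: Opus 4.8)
The plan is to derive both presentations from the Zariski--van Kampen theorem \cite{vK} and then to show, by a careful bookkeeping of the braid monodromy, that the conjugating factors appearing in the induced relations can be absorbed. Throughout I work in the model of Remark \ref{remProveLater}: the reference line $\ell=\{y=a\}$ sits below all the real singular points, the projection is $(x,y)\mapsto x$, and the basepoint $p$ is real. With this choice the geometric generators $x_1,\dots,x_m$ in the fiber $\CC^1_p$ are the standard meridians, and each singular point $p_k$ with critical value $a_k$ contributes a relation obtained by conjugating a \emph{local} relation (a cyclic relation of length $t$ for a multiple point of multiplicity $t$, or an equality $x_i=x_j$ for a branch point of a conic) by a braid $\rho_k$ that records how the strands meeting at $p_k$ are braided with the strands lying to the left of $a_k$. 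The whole problem reduces to showing that, for the two combinatorial regimes in the statement, one can order the critical values and choose the basepoint so that $\rho_k$ fixes the generators entering the $k$-th local relation.

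For part (1) I would argue by induction on the number of lines, using the forest/cycle structure of $G(\LL)$. When $\beta(\LL)=0$ the graph is a forest; I would root each tree and process the multiple points in an order compatible with the rooting, so that when the relation at $p_k$ is read off, every strand to the left of $a_k$ that could braid with a strand through $p_k$ has already been constrained by a previously-recorded relation, forcing $\rho_k$ to act trivially on the relevant meridians. Concretely, deleting a line corresponding to a leaf of $G(\LL)$ reduces $\LL$ to a smaller arrangement whose presentation is conjugation-free by the inductive hypothesis, and I would then check that reinserting the leaf line only adds cyclic relations of the permitted clean form. When $\beta(\LL)=1$ there is a single cycle $C$ in $G(\LL)$; after the hanging trees have been straightened by the forest argument, the only remaining obstruction is the monodromy accumulated once around $C$.

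For part (2) the graph $G(\A)$ is a forest, so the combinatorial skeleton is as in the $\beta=0$ case above, but each conic now contributes two meridians $x_i,x_{i'}$ and, in the generic projection, two vertical tangency (branch) points. I would first record the branch relations $x_i=x_{i'}$, which are already in the allowed form and identify the two meridians of each conic; this lowers the count of independent generators and lets the conic be treated, at each of its multiple points, as a single extra strand. I would then run the same rooted-forest ordering on the multiple points of $\A$, using Assumption \ref{assume} (transversality, no tangencies among components) to guarantee that every intersection relation is a genuine cyclic relation of length equal to the multiplicity, and invoke Theorem \ref{thmRealCLarrBetaZero} as the structural backbone confirming that no cycle-induced conjugation can intervene.

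The main obstacle is the single cycle in part (1): a priori, transporting a meridian once around $C$ returns it conjugated by the product of the half-twists attached to the vertices of $C$, and there is no reason for this product to fix an arbitrary generator. The crux is therefore to show that, relative to the relations already imposed at the vertices of $C$, this accumulated conjugation is trivial on the meridians appearing in each vertex relation --- equivalently, that the global constraint obtained by closing up the cycle is already implied by the local cyclic relations. I expect this to follow from a direct computation of the braid monodromy along a path that traverses $C$ exactly once, exhibiting the conjugating braid as a product of commuting half-twists each of which already fixes the generators it is applied to modulo the vertex relations. The fact that this cancellation succeeds for a \emph{single} cycle of lines --- and, as the later sections show, begins to fail once a conic is forced through the cycle --- is precisely what confines the statement to the regime $\beta(\LL)\le 1$.
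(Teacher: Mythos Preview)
The paper does not prove this proposition at all: it is stated with citations to \cite{EGT2} and \cite{FG} and introduced by ``We recall the following propositions from \cite{EGT2} and \cite{FG}'', so there is no in-paper argument to compare against. What you have written is therefore not a comparison target but a standalone sketch, and it should be assessed on its own merits.

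As a sketch, the forest (\(\beta=0\)) parts are plausible in outline and match the inductive ``add one line through a single singular point'' philosophy that the paper later records as Proposition~\ref{lemAddLineComProof}. However, two points are genuine gaps rather than details. First, for part~(1) with \(\beta(\LL)=1\) you explicitly identify the crux --- that the conjugation accumulated around the unique cycle must be trivial on the relevant meridians modulo the vertex relations --- and then say only that you ``expect this to follow from a direct computation''. That is the whole content of the \(\beta=1\) case; without carrying it out (and it is not a one-line computation: the argument in \cite{EGT2} is substantial and requires controlling how the choice of basepoint interacts with the cycle), nothing has been proved. Second, in part~(2) you propose to ``invoke Theorem~\ref{thmRealCLarrBetaZero} as the structural backbone''. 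That theorem asserts only that \(\pi_1(\CC^2-\A)\) is abstractly a direct sum of free groups and a free abelian group; it says nothing about any particular geometric presentation being conjugation-free, and indeed the paper's own examples (Proposition~\ref{prsG3G4}) show that a group can have this abstract structure while the natural geometric presentation is \emph{not} conjugation-free. So Theorem~\ref{thmRealCLarrBetaZero} cannot close the argument; one really needs the braid-monodromy bookkeeping for CL arrangements carried out in \cite{FG}, which is more delicate than the line case because the two meridians of the conic must be tracked through every half-twist, not merely identified at the end.
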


The conjugation-free property is sometimes preserved while adding a line to the arrangement. Explicitly, we have the following proposition (see \cite{FG}):
\begin{prs} \label{lemAddLineComProof}
(1) Let $\LL$ be a real line arrangement such that  $\pi_1(\CC^2 - \LL,u)$ has a conjugation-free geometric presentation \emph{for any real basepoint} $u \in \ell - N$ (where $N$ is the set of the projection of singular points with respect to the projection $\pi$). Let $L$ be a line not in $\LL$ that passes through a single intersection point of $\LL$. Then,  $\pi_1(\CC^2 - (\LL \cup L),u)$ has a conjugation-free geometric presentation for any real basepoint $u$.

(2) Let $\A$ be a real CL arrangement with one conic such that  $\pi_1(\CC^2 - \A,u)$ has a conjugation-free geometric presentation \emph{for any  real basepoint} $u \in \ell - N$. Let $L$ be a line not in $\A$ that passes through a single intersection point of $\A$ such that $\beta(\A \cup L) = 0$. Then,  $\pi_1(\CC^2 - (\A \cup L),u)$ has a conjugation-free geometric presentation for any real basepoint $u$.
\end{prs}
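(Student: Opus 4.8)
The plan is to realize $\pi_1(\CC^2-(\LL\cup L),u)$ through the Zariski--van Kampen theorem in the concrete model of Remark \ref{remProveLater} and to read off its presentation by comparing the braid monodromy of $\LL\cup L$ with that of $\LL$. Fix the reference line $\ell=\{y=a\}$ with $a\ll 0$, the projection $(x,y)\mapsto x$, and a real basepoint $u\in\ell-N$. By hypothesis $\pi_1(\CC^2-\LL,u)$ already has a conjugation-free presentation on the meridians $x_1,\dots,x_k$, and I adjoin one generator $x_L$ for the meridian of $L$. The van Kampen relations split into three families: those arising from the singular points of $\LL$ lying off $L$, those arising from the single multiple point $P\in\LL$ through which $L$ passes, and those arising from the new simple intersections of $L$ with the remaining components. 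The first family is untouched, and hence conjugation-free by assumption, so the whole content lies in the last two families.

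At $P$, adjoining $L$ raises the multiplicity from $t$ to $t+1$, so the cyclic relation of length $t$ among the meridians meeting at $P$ is replaced by a cyclic relation of length $t+1$ that also involves $x_L$; in the notation of Remark \ref{RemRelCyclic} this is again a relation of the permitted first type, \emph{provided} $x_L$ can be taken adjacent in the fiber to the block of strands through $P$. Here I would use the fact that the presentation of $\pi_1(\CC^2-\LL,u)$ is conjugation-free for \emph{every} real basepoint: this flexibility lets me normalise the strand order near $P$ so that the extended cyclic relation stays conjugation-free, at the cost only of a conjugation-free relabelling of the inherited presentation.

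The crux is the third family. Sweeping the projection outward from $u$, each crossing of $L$ with another component yields a commutator-type relation, but the braid monodromy a priori conjugates $x_L$ (and the meridian it meets) by a word $w$ in the strands already traversed, so that one obtains $[x_L^{w},x_i]=e$ rather than the bare commutator $[x_L,x_i]=e$. I expect eliminating $w$ to be the main obstacle. The plan is to show that, modulo the relations already imposed---the conjugation-free commutators and cyclic relations inherited from $\LL$---every generator occurring in $w$ commutes with $x_L$, so that $x_L^{w}=x_L$ and the relation collapses. The structural reason this should work is exactly that $L$ meets the arrangement in a \emph{single} multiple point: $L$ then carries no edge of the associated graph and creates no new cycle, and by Fan's theorem together with Theorem \ref{thmRealCLarrBetaZero} it is precisely the cycles that obstruct the removal of such conjugations.

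For the conic case (2) the argument runs in parallel, with two modifications. First, $L$ may also meet the conic in up to two further simple points; these are treated exactly like the line--line crossings above, the local braid being a single half-twist of two strands. Second, the hypothesis $\beta(\A\cup L)=0$ takes over the role played for lines by the single-point condition: it guarantees directly that no cycle appears in $G(\A\cup L)$, which is what allows the conjugating words $w$ to be annihilated and keeps us inside the regime of Theorem \ref{thmRealCLarrBetaZero} and Proposition \ref{prsCFless1}. Finally, since the only basepoint-dependent step is the conjugation-free relabelling used to position $x_L$ at $P$, the construction goes through verbatim for every real basepoint $u$, so the resulting presentation of $\pi_1(\CC^2-(\LL\cup L),u)$ (resp.\ $\pi_1(\CC^2-(\A\cup L),u)$) is conjugation-free for any real basepoint, as required.
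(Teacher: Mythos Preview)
This proposition is not proved in the present paper; it is quoted, with attribution, from the companion paper \cite{FG} (see the sentence immediately preceding its statement). There is therefore no in-paper proof against which to compare your attempt.

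Regarding your sketch on its own merits: the overall strategy---compare the van Kampen presentation of $\LL\cup L$ with that of $\LL$ and argue that the extra conjugations vanish---is the natural one. But one step is not as innocent as you claim. You assert that the relations coming from singular points of $\LL$ lying off $L$ are ``untouched''; in fact, adjoining $L$ inserts a new strand in \emph{every} fiber, so the Lefschetz diffeomorphisms carrying an old skeleton back to the basepoint now act on one more point and may braid it with the others along the way. Thus the old relations can themselves acquire conjugations by $x_L$, and removing these is part of the work, not something obtained for free. Your treatment of the third family is likewise only heuristic: invoking Fan's result and Theorem~\ref{thmRealCLarrBetaZero} tells you that the \emph{group} has a nice structure when there are no cycles, but this does not by itself supply a mechanism for simplifying the \emph{presentation}; in particular it does not show that each conjugating word $w$ actually lies in the centraliser of $x_L$ modulo the relations already written down. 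The argument carried out in \cite{FG} requires an explicit case analysis and induction over the singular points, tracking the skeletons and showing at each stage that the conjugations can be absorbed using relations already established.
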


Note  that Proposition \ref{lemAddLineComProof}(2) can be extended to some real CL arrangements with $\beta(\A \cup L) = 1$ (see \cite{FG}).

However, there are arrangements whose corresponding fundamental group does not have a conjugation-free geometric presentation, though it is ``almost" conjugation-free. Let us define this notion.
\begin{definition}\label{defAlmostCF}
Let $G = \pi_1(\CC^2 - \A)$ be a fundamental group of the affine complement of a real CL  arrangement $\A$ with $k$ lines and $n$ conics (where $k>0$ and $n \geq 0$). We say that $G$ has {\em an almost conjugation-free geometric presentation} if there is a geometric generator $x_i$, $1 \leq i \leq k+2n$, such that the image of $G$ under the epimorphism $x_i \mapsto e$ has a conjugation-free geometric presentation.
\end{definition}

The simplest example for this kind of arrangements is the \emph{Ceva arrangement} (also called the \emph{braid arrangement}) $\LL_3$; see Figure \ref{ceva3}. Using the package {\it TESTISOM} (see \cite{HR}), one can show that $\pi_1(\CC^2 - \LL_3)$ does not have a conjugation-free geometric presentation. Sending \emph{any} geometric generator to the identity element corresponds to deleting a line of $\LL_3$. Assuming we delete the line $L$ (see Figure \ref{ceva3}), the resulting arrangement $\LL_3' = \LL_3 - L$ will have $\beta(\LL_3')=0$ and thus, by Proposition \ref{prsCFless1}(1), $\pi_1(\CC^2 - \LL_3')$ has a conjugation-free geometric presentation.

\begin{figure}[!ht]
\epsfysize 3cm
\epsfbox{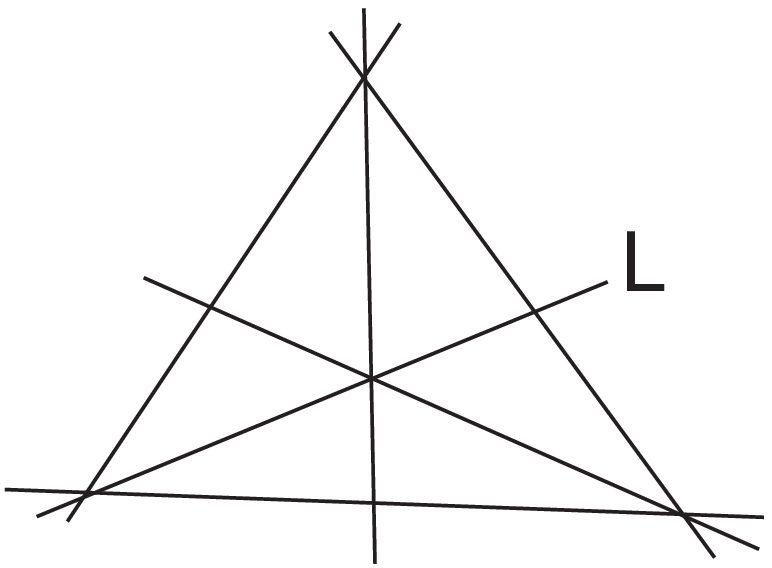}
\caption{$\LL_3$: the Ceva arrangement.}\label{ceva3}
\end{figure}

\subsection{On certain quotients of the fundamental group}\label{subsecCertainsQuotient}

Let $G = \pi_1(\CC^2 - \A)$, where $\A$ is either a line or a CL arrangement of degree $n$.
Denote by $G_2 = [G,G], G_3 = [G,[G,G]]$. In this section, we would like to review and study the structure of $G/G_2, G/G_3$ and $G_2/G_3$.

First, if $\A$ is a line arrangement then $G/G_2 \cong \Z^n$. If there are $k$ conics in the CL arrangement, then  $G/G_2 \cong \Z^{n-k}$.
Second, given  any group $H$, we have the following trivial properties for any $a,b,c \in H$:
\begin{properties}
\label{propH3}

\begin{enumerate}
\item $ [a,cbc^{-1}] \equiv [a,b]\, ({\rm{mod }}\, [H,[H,H]])$.
\item $[a,bc] \equiv [a,c][a,b]\, ({\rm{mod }}\, [H,[H,H]])$.
\item \{$[a,b,c] = e\} \equiv \{[b,a] = [c,b] = [c,a]^{-1}\}\, ({\rm{mod }}\, [H,[H,H]])$.
\end{enumerate}
\end{properties}

Thus, in $G/G_3$, according to Property \ref{propH3}(1) and Remark \ref{RemRelCyclic}, all the cyclic relations have no conjugations, i.e. $G/G_3$ has a conjugation-free geometric presentation, so it depends only on the combinatorics, i.e. the intersection lattice of the arrangement. See also
\cite{MS} for additional information regarding $G/G_3$ for line arrangements.

Turning to $G_2/G_3$, assume that $G$ is generated by $x_1,\ldots,x_n$. We know that $G_2/G_3$ is an abelian group, generated by the commutators $t_{i,j} \doteq [x_i,x_j]$ where $i<j$.  Note that $t_{i,j} = t^{-1}_{j,i}$. Thus $G_2/G_3$ is a quotient of $\Z^{ \binom{n}{2}}$. Moreover, if  the relation $[x_i,x_j^{\g}]=e$ holds in $G$, where $\g \in G$, then in $G_2/G_3$ we have that $t_{i,j} = e$ (by Property \ref{propH3}(1)).

Denoting $\phi_k \doteq $ rank$(G_k/G_{k+1})$, it is well-known that for a line arrangement $\A$, $\phi_k$ are determined by the combinatorics of the arrangement. Moreover, $\phi_2 = a_2$, where $a_i$ is the number of minimal generators of degree $i$ in the Orlik-Solomon ideal $I$, or that $a_2 = \binom{n}{2} - b_2$, where $b_2$ is the second Betti number of $\C^2 - \A$ (see  \cite{Falk}). We now give a different combinatorial description of $\phi_2$.

 Property \ref{propH3}(3) implies that every cyclic relation of length $3$ in $G$ (induced by  a triple point) is equivalent to an equality
of the form $t_{j,i} = t_{k,j} = t_{k,i}^{-1}$ in $G_2/G_3$. Thus, while every triple point contributes three generators to $G_2/G_3$, two of them can be expressed as the third (or as its inverse).

In the same way, while an intersection point $p$ of multiplicity $m$ contributes $\binom{m}{2}$ generators to $G_2/G_3$, $m-1$ of them can be expressed
as a product of the others; thus an intersection point of multiplicity $m$ contributes $v(p) \doteq \binom{m}{2} - m +1$ independent generators to $G_2/G_3$. Let $v(G) \doteq \sum_p v(p)$, where the sum goes over the intersection points of multiplicity $m \geq 2$.

 Note that for $\A$  a line arrangement, the generator $t_{i,j}$ appears as a term in the relations of $G_2/G_3$ only when describing the
 relations induced by the intersection of line $i$ and line $j$. Thus for every two intersection points, the generators $t_{i,j}$ contributed by them are different, and we see that $G_2/G_3 \cong \Z^{v(G)}$ (for a different point of view on the group $G_2/G_3$ in the case of line arrangements, see \cite{ELST}). However, this is already not true for CL arrangements, as can be seen in fourth step of Theorem \ref{prsGneven} below. That is, in CL arrangements, $G_2/G_3 \cong \Z^h$ where $h \leq v(G)$.

\begin{remark}
In the following sections, we use the braid monodromy techniques in order to compute the fundamental group of the complement of an arrangement. Since this material was covered extensively in numerous papers, we refer the reader to \cite{FG} for a survey of these methods.
\end{remark}

\section{A CL arrangement whose graph is a cycle of length $3$ or $4$}\label{secCycle34}
In this section, we concentrate in two special cases of CL arrangements -- when the arrangement consists of one conic and three (or four) lines,
such that the conic passes through three (or four) intersection points, and the graph of the arrangement is either
a cycle of length $n=3$ or $n=4$. We are interested in whether the \emph{affine} fundamental group is either abelian or has a conjugation-free presentation, as these ``mini-examples" would serve as our guiding examples in the next section, where we investigate the general case.

\begin{notation}
Recall the notation
$[a,b,c]=e$ stands for the cyclic relations:
$cba = bac = acb$ (see Notation \ref{RemRelCyclic}).
\end{notation}


It is known that for $n=3$, the projective fundamental group and  the affine fundamental group of this arrangement are abelian, by Degtyarev \cite{Deg}. Nevertheless, we prove it again  here, as the argument presented here will be generalized in Proposition \ref{prsGnOdd}.
We also present the case of $n=4$, which is the minimal example for even $n$.

Let $\A_3$ (resp. $\A_4$) be the following CL arrangement in $\C^2$: given three (four) generic real lines, draw a circle passing through the three (four) intersection points of the lines; for an illustration of $\A_3$, see Figure  \ref{3linesConic}, and for $\A_4$, see Figure \ref{4linesConic}.
Let $M_n \doteq \pi_1(\C^2 - \A_n)$ for $n=3,4$.

 \begin{prs} \label{prsG3G4}
 The groups $M_3, M_4$ are not conjugation-free. Moreover:
 \begin{enumerate}
 \item[(a)] $M_3$ is abelian.
 \item[(b)] $M_4 \cong \ZZ^3 \oplus \FF_2$.
 \end{enumerate}
 \end{prs}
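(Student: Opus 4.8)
The plan is to compute both fundamental groups via the braid monodromy / Zariski--van Kampen method, using the model fixed in Remark~\ref{remProveLater}: project to $\ell = \{y=a\}$ with $a \ll 0$, induce geometric generators in a generic fiber, and read off the relations from the singular fibers. For $\A_3$ I would order the generators so that the three lines give meridians $x_1,x_2,x_3$ and the conic gives two meridians (since a generic vertical line meets the conic in two points). Each of the three intersection points lies on the conic and is a triple point where two lines meet one branch of the conic, so it contributes a cyclic relation of length $3$; the two branch points of the conic (where the projection is tangent) contribute relations identifying the two conic-meridians after suitable conjugation. The main computation is to write these relations explicitly and then abelianize cleverly.

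First I would establish the abelianness of $M_3$ in part~(a). The key observation is that although each triple point $p$ gives a cyclic relation $[a,b,c]=e$ that is \emph{not} conjugation-free (the meridian of the conic branch appears conjugated because of how the braid monodromy braids it around the line meridians), the three such relations coming from the three vertices, together with the two branch-point relations for the conic, force all generators to commute. Concretely, by Property~\ref{propH3}(3) each length-$3$ relation projects in $G_2/G_3$ to an identity $t_{j,i}=t_{k,j}=t_{k,i}^{-1}$; combining the three vertices (which cyclically share the line-meridians) together with the conic's branch-point identifications collapses all the $t_{i,j}$ to $e$, so $G_2/G_3 = 0$. Since $G/G_2 \cong \ZZ^{n-k}$ is abelian and $G_2/G_3=0$ forces $G_2=G_3=\cdots$, a standard lower-central-series argument (the group is generated by finitely many meridians and is residually nilpotent in this setting) gives $G_2=\{e\}$, hence $M_3$ is abelian. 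I would still exhibit the explicit presentation to see directly that it is \emph{not} conjugation-free: no reordering of generators removes the conjugations forced by the braiding of the conic around the lines, which I would verify by the same kind of argument used for $\LL_3$ (or by invoking that a conjugation-free presentation of an abelian group would have to be the free abelian group on the meridians, contradicting the relation count).

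For part~(b), the arrangement $\A_4$ has four lines meeting the conic in four triple points arranged in a $4$-cycle. Here I expect $G/G_2 \cong \ZZ^4$ (four lines, $n-k = 6-2 = 4$) and the crucial point is that the $4$-cycle, being of even length, does \emph{not} collapse $G_2/G_3$ the way the odd triangle does. I would compute $G_2/G_3$ explicitly from the four length-$3$ cyclic relations plus the two conic branch-point relations, expecting to find it of rank $3$. To pin down the full group I would produce the splitting $M_4 \cong \ZZ^3 \oplus \FF_2$ by identifying a central free-abelian part $\ZZ^3$ (generated by suitable products of meridians that are forced central by all the relations) and a complementary free group $\FF_2$ on two of the generators with no relations between them beyond those already absorbed. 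The main obstacle will be the $\FF_2$ factor: showing that two chosen meridians generate a \emph{free} subgroup, i.e. that the relations impose \emph{no} constraint on them, rather than merely that the group surjects onto $\ZZ^3 \oplus \FF_2$. I would handle this by exhibiting an explicit isomorphism, checking that every braid-monodromy relation is satisfied in $\ZZ^3 \oplus \FF_2$ under the assignment and that, conversely, the abelianization and the $G_2/G_3$ computation match the invariants of $\ZZ^3 \oplus \FF_2$ (namely $\ZZ^4$ and the correct lower-central-series ranks), forcing the presented group to be no larger; a simplification of the presentation via Tietze transformations should make the two free generators manifest.
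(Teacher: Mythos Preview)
Your proposal has genuine gaps in both parts.

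\textbf{Part (a).} The step ``$G_2/G_3=0$ implies $G_2=\{e\}$'' rests on an unjustified claim of residual nilpotence. Nothing in the setup guarantees that $\bigcap_i G_i=\{e\}$ for a CL-arrangement group, and you give no argument for it. Worse, your computation of $G_2/G_3$ is not actually carried out: you have not written down the branch-point relations, and those are precisely where the content lies. In the paper's computation the two branch points give $x_1=x_4$ and $x_2x_1x_2^{-1}=x_4$; it is the \emph{combination} of these that yields $[x_1,x_2]=e$ directly in $M_3$, and this single commutator then cascades through the three cyclic relations to force all generators to commute. No nilpotence hypothesis is needed. Note also that this same relation is what distinguishes $M_3$ from $M_3^{\mathrm{cf}}$: in the conjugation-free model the second branch relation is just $x_1=x_4$, so $[x_1,x_2]=e$ is \emph{not} forced, and indeed the paper shows $(M_3^{\mathrm{cf}})_2/(M_3^{\mathrm{cf}})_3\cong\ZZ$. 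So the branch-point computation you skip is exactly the heart of the matter, for both abelianness and the failure of conjugation-freeness.

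\textbf{Part (b).} Two numerical errors derail your outline. First, $\A_4$ has one conic, so $k=1$ and $M_4/(M_4)_2\cong\ZZ^{6-1}=\ZZ^5$, not $\ZZ^4$; this is consistent with $\ZZ^3\oplus\FF_2$ abelianizing to $\ZZ^5$. Second, for $K=\ZZ^3\oplus\FF_2$ one has $K_2/K_3=(\FF_2)_2/(\FF_2)_3\cong\ZZ$, rank $1$, not rank $3$. Beyond these, matching $G/G_2$ and $G_2/G_3$ cannot pin down the group, and your fallback (``Tietze transformations should make the two free generators manifest'') is exactly what the paper does, via explicit substitutions $x_{1'}=x_3x_2x_1$, then $x_{2'}=x_3x_2x_3^{-1}$, then $x_{2''}=x_{2'}^{-1}x_5$, which peel off two central $\ZZ$ factors and leave $\langle x_3,x_5,x_6:[x_3,x_5,x_6]=e\rangle\cong\ZZ\oplus\FF_2$. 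There is no shortcut through the lower central series here; you need the explicit presentation and the right change of generators.
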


 \begin{proof}
(a)  We look at Figure \ref{3linesConic}, where the dashed line is the initial fiber.

\begin{figure}
\epsfysize 5cm
\epsfbox{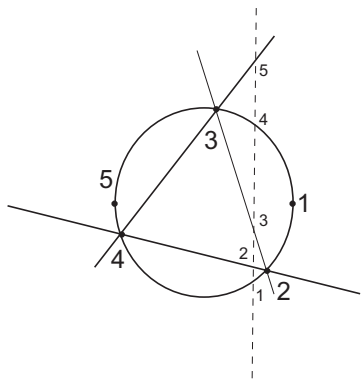}
\caption{A CL arrangement whose graph is a cycle of length 3: the small numbers stand for the numeration of the generators of $M_3$; the larger numbers stand for the numeration of the singular points.}\label{3linesConic}
\end{figure}

The following table describes how we compute the skeletons from the singular points of the arrangement with respect to the
projection: $j$ stands for the numeration of the images of the singular points  ($j \in \{1,\ldots,5\}$), $\lambda_{x_j}$ is the local Lefschetz skeleton describing which points, locally numerated, coincide when approaching the singular point $j$, and $\delta_{x_j}$ describes the Lefschetz diffeomorphism induced by a path going below the point $j$. Figure \ref{3linesConic_rel} presents the final skeletons of the singular points, induced by the Moishezon-Teicher method (see \cite{MoTe2} and a short survey in \cite[Section 2]{FG}).

\begin{center}
\begin{tabular}{|c|c|c|}
\hline
$j$ & $\lambda_{x_j}$ &  $\delta_{x_j}$
\\[0.5ex]\hline
1 & $[3,4]$ & $-$ \\
2 & $[1,3]$ & $\Delta \langle 1,3 \rangle$ \\
3 & $[3,5]$ & $\Delta \langle 3,5 \rangle$ \\
4 & $[1,3]$ & $\Delta \langle 1,3 \rangle$ \\
5 & $[3,4]$ & $-$ \\
\hline
\end{tabular}
\end{center}

\begin{figure}[!h]
\epsfysize 4cm
\epsfbox{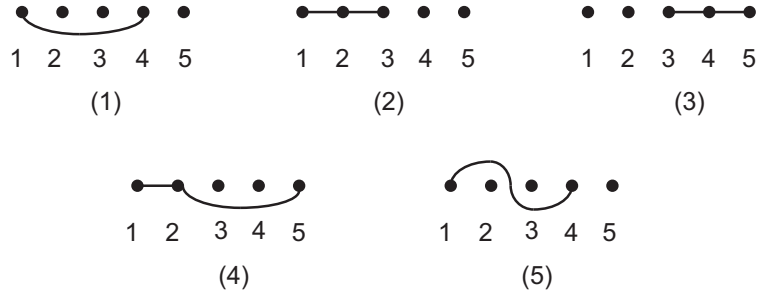}
\caption{The skeletons associated to the singular points (1)--(5).}\label{3linesConic_rel}
\end{figure}

By the Zariski-van Kampen theorem, we get that $M_3$ is generated by $5$ generators (denoted by $x_1,x_2,\dots,x_5$) with the following relations:
\begin{enumerate}
\item $x_1=x_4$,
\item $[x_1,x_2,x_3]=e$,
\item $[x_3,x_4,x_5]=e$,
\item $[x_1,x_2,x_5]=e$,
\item $x_2x_1x_2^{-1}=x_4$.
\end{enumerate}

By the first and the last relations, we get that $[x_1,x_2]=e$ and thus the  relations in (4) are decomposed into three commutating relations:
$$[x_1,x_2] = [x_1,x_5] = [x_2,x_5] = e,$$ which in turn dissolve the other two cyclic relations into commutative relations. Thus $M_3$ is abelian and isomorphic to the abelian group $\ZZ^4$.

\medskip

The group $M_3$ is not conjugation-free, since if it were, then relation (5) should have been the  relation  $x_1=x_4$ in the above presentation. Therefore, $M_3$  would have been isomorphic, by Definition \ref{CFGP-CLArr1}, to the following group:
$$
M_3^{\text{cf}} = \langle x_1,x_2,x_3,x_5 : [x_1,x_2,x_3] = [x_3,x_1,x_5] = [x_1,x_2,x_5] = e \rangle.
$$
However, while $M_3$ is abelian, $M_3^{\text{cf}}$ is not$^1$\stepcounter{footnote}\footnotetext{We thank Uzi Vishne for giving this argument.}: Let $H \doteq M_3^{\text{cf}}.$ Then $H/([H,[H,H]])$ is a central but non-abelian extension of $H/[H,H] = \Z^4$ by $[H,H]/[H,[H,H]] = \Z$ (for computing the last equality we used Property \ref{propH3}(3)); thus $H$ is not abelian (see the fourth step of the proof of Theorem \ref{prsGneven} for an extended explanation for these kinds of arguments).

\medskip

\noindent
(b) Let us compute $M_4$. Let us look at Figure \ref{4linesConic}.

\begin{figure}[!h]
\epsfysize 7cm
\epsfbox{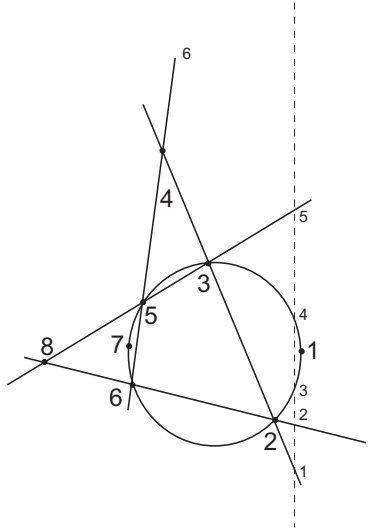}
\caption{A CL arrangement whose graph is a cycle of length 4: as before the small numbers stand for the numeration of the  generators of $M_4$; the larger numbers stand for the numeration of the singular points.}\label{4linesConic}
\end{figure}

Again, the following table describes how we compute the  relations induced by the singular points of the arrangement with respect to the
projection: $j$ stands for the numeration of the images of the singular points, $1 \leq j \leq 8$, $\lambda_{x_j}$ is the local Lefschetz skeleton, describing which points, locally numerated, coincide when approaching the singular point $j$, and $\delta_{x_j}$ describes the Lefschetz diffeomorphism induced by a path going below the point $j$. Figure \ref{4linesConic_final} presents the  skeletons of the singular points (3)--(8), induced by the Moishezon-Teicher method.

\begin{center}
\begin{tabular}{|c|c|c|}
\hline
$j$ & $\lambda_{x_j}$ &  $\delta_{x_j}$
\\[0.5ex]\hline
1 & $[3,4]$ & $-$ \\
2 & $[1,3]$ & $\Delta \langle 1,3 \rangle$ \\
3 & $[3,5]$ & $\Delta \langle 3,5 \rangle$ \\
4 & $[5,6]$ & $\Delta \langle 5,6 \rangle$ \\
5 & $[3,5]$ & $\Delta \langle 3,5 \rangle$ \\
6 & $[1,3]$ & $\Delta \langle 1,3 \rangle$ \\
7 & $[3,4]$ & $\Delta^{\frac{1}{2}}_{\text{IR}} \langle 2 \rangle$ \\
8 & $[2,3]$ & $-$ \\
\hline
\end{tabular}
\end{center}

\begin{figure}[h!]
\epsfysize 5cm
\epsfbox{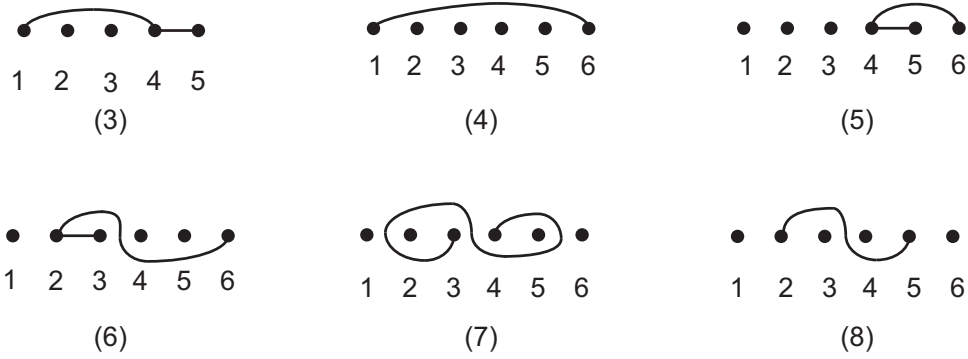}
\caption{The skeletons associated to the singular points  (3)--(8).} \label{4linesConic_final}
\end{figure}

Again, by the Zariski-van Kampen theorem, the group $M_4$ is generated by $6$ generators (denoted by $x_1,\dots,x_6$) with the following relations:
\begin{enumerate}
\item $x_3=x_4$,
\item $[x_1,x_2,x_3]=e$,
\item $[x_3 x_2 x_1 x_2^{-1} x_3^{-1},x_4,x_5]=e$,
\item $[x_6,x_5 x_4 x_3 x_2 x_1 x_2^{-1} x_3^{-1} x_4^{-1} x_5^{-1}]=e$,
\item $[x_5,x_5 x_4 x_5^{-1},x_6]=e$,
\item $[x_3,x_3 x_2 x_3^{-1},x_6]=e$,
\item $x_3 x_2 x_3 x_2^{-1} x_3^{-1} = x_5 x_4 x_5^{-1}$,
\item $[x_5,x_3 x_2 x_3^{-1}]=e$.
\end{enumerate}

\medskip

By relation (2), we get that relation (3) is equivalent to
$$[x_1,x_4,x_5] = [x_1,x_3,x_5]=e.$$
Using relations (2) and (3), we get that relation (4) is equivalent to $[x_1,x_6]=e$.

Moreover, relation (5) is equivalent to the following two relations:
$$x_6 x_5 x_4 x_5^{-1} x_5  =  x_5 x_4 x_5^{-1} x_5 x_6 ,\,\,\,\, x_5 x_4 x_5^{-1} x_5 x_6 = x_5 x_6 x_5 x_4 x_5^{-1}$$
or to
$$x_6 x_5 x_4 = x_5 x_4 x_6= x_4 x_6 x_5,$$
i.e., the relation is in fact equivalent to the cyclic relation $[x_4,x_5,x_6]=e$.
Similarly, relation (6) can be simplified to $[x_2,x_3,x_6]=e$.
Note that this is actually a general phenomenon, see Remark \ref{triple_rel_gen} below.

Multiplying relation (7) by $x_1$ from the left and using the fact that $x_4=x_3$, we get:
$$x_1 x_3 x_2 x_3 x_2^{-1} x_3^{-1} = x_1 x_5 x_3 x_5^{-1}.$$
Now use relations (2) and (3) to get that relation (7) is redundant.

\medskip

Hence, we get the following equivalent set of relations:
\begin{enumerate}
\item $x_3=x_4$,
\item $[x_1,x_2,x_3]=e$,
\item $[x_1,x_4,x_5]=e$,
\item $[x_1,x_6]=e$,
\item $[x_4,x_5 ,x_6]=e$,
\item $[x_2,x_3 ,x_6]=e$,
\item $[x_5,x_3 x_2 x_3^{-1}]=e$.
\end{enumerate}

\medskip

First, let us prove that $M_4$ is not conjugation-free. If it were, then relation (7), which is the only relation which has conjugations, would have been $[x_5,x_2]=e$. Denote by $M_4^{\rm cf}$ the group generated by 6 generators $x_1,\dots,x_6$ with the relations (1)--(6) and the relation $[x_5,x_2]=e.$ Using GAP \cite{GAP}, we find out that the number of epimorphisms of $M_4$ to the symmetric group $S_3$ is 3, whereas the number of epimorphisms of $M_4^{\rm cf}$ to $S_3$ is 1 (note that this fact already shows that $M_4$ is not abelian). Since the two groups are not isomorphic, it means that $M_4$ is not conjugation-free.

Note that although the above presentation of $M_4$  depends on the basepoint $u$ and on the reference line,  different choices
of them would induce an \emph{isomorphic} group, hence the number of epimorphisms to the symmetric group $S_3$ would remain the same (see \cite[Conjecture 2.16]{FG}).

\medskip

Second, we prove that $M_4 \cong \ZZ^3 \oplus \FF_2$.
Define: $x_{1'} = x_3 x_2 x_1$. Using $[x_1,x_2,x_3]=e$ and $x_1 = x_2^{-1} x_3^{-1} x_{1'}$, we get that $[x_{1'},x_2] = [x_{1'},x_3]=e$. Now, from $[x_1,x_3,x_5]=e$, we get
$[x_2^{-1} x_3^{-1}x_{1'},x_3,x_5]=e$, which induces the relation: $x_5 x_3 x_2^{-1} x_3^{-1} x_{1'} = x_3 x_2^{-1} x_3^{-1} x_{1'} x_5$. Using $[x_5,x_3 x_2 x_3^{-1}]=e$ (and thus
$[x_5,x_3 x_2^{-1} x_3^{-1}]=e$), we see that $[x_{1'},x_5]=e$. This means that
$$[x_2^{-1} x_3^{-1}x_{1'},x_3,x_5]=e\,\,\, \Rightarrow \,\,\,  [x_2^{-1} x_3^{-1},x_3,x_5]=e.$$ One of the relations induced by this relation is
$x_5x_3x_2^{-1}x_3^{-1} = x_2^{-1}x_3^{-1}x_5x_3$. As $[x_5,x_3 x_2^{-1} x_3^{-1}]=e$, we get that
$x_3x_2^{-1}x_3^{-1}x_5 = x_2^{-1}x_3^{-1}x_5x_3$ or $[x_3,x_2^{-1}x_3^{-1}x_5]=e$.

From the relation $[x_1,x_6]=e$, using $[x_2,x_3,x_6]=e$, we get that $[x_{1'},x_6]=e.$
Since $x_3=x_4$, we get that the new generator $x_{1'}$ commutes with all the other generators, i.e.
$M_4$ is isomorphic to the group $M_4^1$ generated by $5$ generators $x_{1'},x_2,x_3,x_5,x_6$ with the following relations:
\begin{enumerate}
\item $[x_{1'},x_i]=e,\, \mbox{ where } i \in \{ 2,3,5,6 \}$,
\item $[x_3,x_2^{-1}x_3^{-1}x_5]=e$,
\item $[x_3,x_5,x_6] = [x_2,x_3,x_6] = [x_5,x_3 x_2 x_3^{-1}] = e$.
\end{enumerate}

\medskip

Now, define $x_{2'} = x_3 x_2 x_3^{-1}$. Note that this substitution indicates that this group might not have a conjugation-free presentation (since we are using a different generator than the geometric meridian for the simplified presentation), as we have already shown. This means that $[x_5,x_{2'}]=e$ and $[x_3,x_{2'},x_6]=e$ (the last relation is in fact the second relation in relation (3) in the presentation of $M_4^1$). The relation $[x_3,x_2^{-1} x_3^{-1} x_5]=e$ is turned into
$[x_3, x_{2'}^{-1} x_5]=e$. Note that $[x_{1'},x_{2'}]=e$. Now, let $x_{2''} = x_{2'}^{-1} x_5$, so $x_{2'} = x_5 x_{2''}^{-1}$. Thus $[x_3,x_{2''}]=e$ and since $[x_5,x_{2'}]=e$, we get that $[x_5,x_{2''}]=e$. Note that $[x_{1'},x_{2''}]=e$. From the relation $[x_3,x_{2'},x_6]=e$, we get $x_6 x_{2'} x_3 = x_{2'} x_3 x_6$ or $x_6 x_5 x_{2''}^{-1} x_3 = x_5 x_{2''}^{-1} x_3 x_6$. Using $[x_3,x_{2''}]=e$, we get: $x_6 x_5 x_3 x_{2''}^{-1} = x_5 x_3 x_{2''}^{-1} x_6$. Since $[x_3,x_5,x_6]=e$ (i.e. $[x_6,x_5 x_3]=e$), we get that
$[x_6,x_{2''}]=e$. Therefore, $M_4^1$ is isomorphic to the group $M_4^2$ generated by $5$ generators $x_{1'},x_{2''},x_3,x_5,x_6$ where
$x_{1'}$ and $x_{2''}$ commute with all the other generators, and it has the additional cyclic relation $[x_3,x_5,x_6]=e$. Explicitly,
$$M_4 \cong \langle x_{1'} \rangle \oplus \langle x_{2''} \rangle \oplus \langle x_3,x_5,x_6 : [x_3,x_5,x_6]=e \rangle \cong
\ZZ \oplus \ZZ \oplus (\ZZ \oplus \mathbb{F}_2)
\cong \ZZ^3 \oplus \mathbb{F}_2.$$ \end{proof}

\begin{remark} \label{triple_rel_gen}
{\rm  As indicated in the proof above, note that whenever we have a cyclic relation induced by the set of paths appearing in Figures \ref{triple_rel_generic}(a) or \ref{triple_rel_generic}(b), then the cyclic relation is equivalent to the cyclic relation $[x_d,x_{d+1},x_c]=e$.}

\begin{figure}[h!]
\epsfysize 2cm
\epsfbox{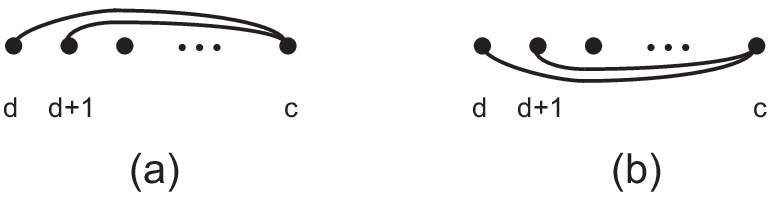}
\caption{}\label{triple_rel_generic}
\end{figure}

\end{remark}


The proof of Proposition \ref{prsG3G4} suggests that there is a distinction between the case of even $n$ and the case of odd $n$. As can be seen, the relation induced by the left branch point of the conic turns into a commutative relation in $M_3$, whereas it becomes trivial in $M_4$. This is the essential difference that distinguishes between $M_{2k}$ and $M_{2k+1}$.

\begin{remark}
\emph{Note that both groups $M_3$ and $M_4$ are \emph{almost-conjugation-free} (see Definition \ref{defAlmostCF}), as after sending any geometric generator, that corresponds to one of the lines, to the identity element, we get that the resulting arrangement has no cycles in its associated graph, and thus, by Proposition \ref{prsCFless1}(2), it is conjugation-free.
}\end{remark}

\section{A CL arrangement whose graph is a cycle: The general case}\label{secGenGraph}

In this section, we study the general case, whose specific cases were investigated in Section \ref{secCycle34}. We concentrate in the case of CL arrangements, having one conic, where the graph of this arrangement consists of one cycle of  length $n$, when the conic passes through all the point which correspond to the vertices of the graph. This means that the arrangement cannot be built inductively
using Proposition \ref{lemAddLineComProof}(2), i.e.
by adding, at each stage, a line that passes through only one singular point. Eventually, we would have to draw a line that passes through two intersection points, an operation which does not necessarily preserve the conjugation-free property.

We are interested in whether the \emph{affine} fundamental group is either abelian or has a conjugation-free presentation, as this would enable us to induce the structure of the fundamental group for more complicated arrangements. Indeed, as we saw in the previous section, for $n=3$, this group was abelian and for $n=4$, the group was a direct sum of a free abelian group and a free group, and both groups were not conjugation-free.

\medskip

 We take a regular $n$-gon in $\RR^2$ and we draw a circle through its $n$ vertices. We then extend the edges to be infinite straight lines, and we look at the resulting complexified arrangement in $\CC^2$. Thus we get a real CL arrangement $\mathcal{A}_n$ whose graph is a cycle of length $n$. Denote $M_n \doteq \pi_1(\CC^2 - \mathcal{A}_n)$.


In order to investigate $M_n$, we use a lemma which helps us to analyze the braid monodromy of the CL arrangement $\A_n$. We recall the Artin presentation of the braid group on $n+1$ strands:
$$B_{n+1} = \{ \sigma_1,\dots,\sigma_n : \langle \sigma_i, \sigma_{i+1} \rangle=e, \text{ for } 1 \leq i \leq n;\,\, [\sigma_i,\sigma_j]=e \text{ for } |i-j|>1 \},$$
where $\langle a,b \rangle \doteq abab^{-1}a^{-1}b^{-1}$.

\begin{lemma} \label{lemDelta}
Let $n \geq 2$, and
$$\Delta_{n+1} = \sigma_n(\sigma_{n-1} \sigma_n) (\sigma_{n-2} \sigma_{n-1} \sigma_n) \cdots (\sigma_1 \sigma_2 \sigma_3 \cdots \sigma_n)$$
be the Garside element in $B_{n+1}$ with respect to the Artin presentation. Define $\sigma_{n'} \doteq \sigma_{n+1} \sigma_n \sigma_{n+1} \in B_{n+2}$, and define
$$\Delta'_{n+1} = \sigma_{n'} (\sigma_{n-1} \sigma_{n'})(\sigma_{n-2}\sigma_{n-1} \sigma_{n'}) \cdots (\sigma_1 \sigma_2 \sigma_3 \cdots \sigma_{n'}) \in B_{n+2}.$$
Then:
$$\Delta'_{n+1} = \Delta_{n+2}\cdot \sigma_2 \sigma_3 \cdots \sigma_n.$$
\end{lemma}

\begin{proof}
The proof is by induction on $n$. We start with $n+1=3$:
$$\Delta'_3 = (\sigma_3 \sigma_2 \sigma_3) (\sigma_1 \cdot \sigma_3 \sigma_2 \sigma_3)
\overset{\sigma_3\sigma_2 \sigma_3 =\sigma_2\sigma_3 \sigma_2}{=}
(\sigma_3 \sigma_2 \sigma_3) (\sigma_1 \sigma_2 \sigma_3 \sigma_2) = (\sigma_3)(\sigma_2 \sigma_3) (\sigma_1 \sigma_2 \sigma_3) \sigma_2
=  \Delta_4 \cdot \sigma_2$$

Denote: $$\delta_{n+2} = \sigma_{n+1} \sigma_n \sigma_{n-1} \cdots  \sigma_2 \sigma_1 \in B_{n+2}.$$
Now, assume  that $\Delta'_n = \Delta_{n+1}\cdot \sigma_2 \sigma_3 \cdots \sigma_{n-1}$, and compute:
\begin{eqnarray*}
\Delta'_{n+1} &=& \sigma_{n'} (\sigma_{n-1} \sigma_{n'})(\sigma_{n-2} \sigma_{n-1} \sigma_{n'}) \cdots (\sigma_1 \sigma_2 \sigma_3 \cdots \sigma_{n'})=\\
&=& (\sigma_{n+1} \sigma_n \sigma_{n+1})(\sigma_{n-1} \cdot \sigma_{n+1} \sigma_n \sigma_{n+1})(\sigma_{n-2} \sigma_{n-1} \cdot \sigma_{n+1} \sigma_n \sigma_{n+1}) \cdot \\
 &  &   \cdots (\sigma_1 \sigma_2 \sigma_3 \cdots \sigma_{n-1} \cdot \sigma_{n+1} \sigma_n \sigma_{n+1}) =\\
&=& \delta_{n+2}\cdot \sigma_{n+1} \cdot \\
 & &  \cdot  \big[ (\sigma_{n+1} \sigma_n \sigma_{n+1})(\sigma_{n-1} \sigma_{n+1} \sigma_n \sigma_{n+1}) \cdots (\sigma_2 \sigma_3 \cdots \sigma_{n-1} \sigma_{n+1} \sigma_n \sigma_{n+1}) \big],
\end{eqnarray*}
where  the last equality uses the fact that  if $i>j+1$, then $[\sigma_i,\sigma_j]=e$.

Observe that the braids in the squared brackets (in the right hand side of the last equation) do not affect the first strand since $\sigma_1$ does not appear there, and thus the expression in the brackets is an element in the image of the homomorphism $\varphi : B_{n+1} \to B_{n+2}$, defined by $\varphi(\sigma_i)=\sigma_{i+1}$, for all $1 \leq i \leq n$. Note that the expression in these brackets is $\varphi(\Delta'_n)$. Using the induction hypothesis that
$$\Delta'_n = \Delta_{n+1}\cdot \sigma_2 \sigma_3 \cdots \sigma_{n-1},$$
we get:
\begin{eqnarray*}
\Delta'_{n+1}& =&  \delta_{n+2}\sigma_{n+1} \cdot \varphi(\Delta_{n+1}\cdot \sigma_2 \sigma_3   \cdots \sigma_{n-1}) = \\
  & = &  \delta_{n+2} \varphi(\sigma_n \cdot \Delta_{n+1}) \sigma_3 \sigma_4 \cdots \sigma_n = \\
  & = &  \delta_{n+2} \varphi(\Delta_{n+1}\cdot \sigma_1) \sigma_3 \sigma_4 \cdots  \sigma_n = \\
  & = & \delta_{n+2} \varphi(\Delta_{n+1}) \cdot \sigma_2 \sigma_3 \sigma_4  \cdots  \sigma_n = \\
  & = & \Delta_{n+2}\cdot \sigma_2 \sigma_3 \cdots \sigma_n.
\end{eqnarray*}

\end{proof}

We  start with the odd case:

\begin{thm} \label{prsGnOdd}
For odd $n=2k+1>1$, the group $M_n$ is abelian.
\end{thm}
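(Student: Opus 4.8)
The plan is to mirror the structure of the proof of Proposition~\ref{prsG3G4}(a), but to handle the general odd cycle $\A_n$ by reading off the braid monodromy via the Moishezon--Teicher method and then exploiting Lemma~\ref{lemDelta} to control the full twist coming from the conic's two branch points. First I would set up the generators: the Zariski--van Kampen theorem gives $n+2$ generators $x_1,\dots,x_{n+2}$ in the fiber, where the two extra generators (beyond the $n$ lines) come from the two sheets of the conic. As in the $n=3,4$ cases, the singular points split into three groups --- the two branch points of the conic, and the $n$ triple points where a line meets the conic and another line --- and each contributes either an identification relation $x_i=x_j$ (from a branch point on one side), a cyclic relation $[x_a,x_b,x_c]=e$ (from a triple point), or a conjugated identification relation $x_j x_i x_j^{-1} = x_k$ (from the second branch point, after the half-twist $\Delta^{1/2}$ of the conic has dragged the fiber through the configuration).

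\medskip

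The key computational step is to write down that last conjugating relation explicitly. This is exactly where Lemma~\ref{lemDelta} enters: the Lefschetz diffeomorphism induced by passing below the second branch point of the conic is governed by $\Delta'_{n+1}$, and the lemma rewrites it as $\Delta_{n+2}\cdot \sigma_2\sigma_3\cdots\sigma_n$. The factor $\Delta_{n+2}$ is the full twist, which acts on the meridians by the standard cyclic permutation/conjugation and, combined with the relation coming from the first branch point, will produce a relation of the form $x_j x_i x_j^{-1} = x_i$ --- i.e.\ a genuine commutator $[x_i,x_j]=e$ --- precisely when $n$ is odd. I would then argue, as in the $n=3$ case where relations (1) and (5) together gave $[x_1,x_2]=e$, that this single commutation relation dissolves all of the cyclic relations $[x_a,x_b,x_c]=e$ into ordinary commutators (using Property~\ref{propH3}(3), each cyclic relation becomes $t_{a,b}=t_{b,c}=t_{c,a}^{-1}$, and once enough $t$'s vanish the rest follow). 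Propagating these commutations around the cycle shows every pair of generators commutes, so $M_n$ is abelian.

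\medskip

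The main obstacle I expect is bookkeeping the parity: tracking how the extra half-twist factor $\sigma_2\cdots\sigma_n$ from Lemma~\ref{lemDelta}, together with the orientation of the branch-point half-twists, forces the conjugating relation to collapse to a commutator exactly in the odd case (and to a \emph{trivial} relation in the even case, as flagged in the remark after Proposition~\ref{prsG3G4}). Getting the exact conjugating word right --- i.e.\ which generator conjugates which, and why the conjugation cancels when $n=2k+1$ --- is the delicate point; the rest is the routine verification that one commutator relation cascades through the cyclic relations to abelianize the whole group. I would organize the write-up so that the braid-monodromy table (as in the $n=3,4$ examples) produces the relation list, then isolate the single ``parity'' relation, and finally run the abelianization argument uniformly for all odd $n$.
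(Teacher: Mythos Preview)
Your overall strategy matches the paper's --- compute the branch-point relation via Moishezon--Teicher, use Lemma~\ref{lemDelta}, show it collapses to a single commutator, and then deduce abelianness --- but several of the mechanisms you describe are not how the argument actually runs, and as written your plan has real gaps.

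First, Lemma~\ref{lemDelta} is not applied to a single global diffeomorphism ``$\Delta'_{n+1}$'' on all strands. In the paper the skeleton of the second branch point $q$ is cut into two paths $\gamma_1,\gamma_2$ (one coming from the triple points below the $x$-axis, one from those above), and Lemma~\ref{lemDelta} is applied separately to each half, with $k$ lines on each side. The resulting relation is not of the clean form $x_j x_i x_j^{-1}=x_k$ you anticipate, but rather a long equality
\[
x_{k+1}x_k\cdots x_1\,x_k\,x_1^{-1}\cdots x_{k+1}^{-1}=x_{2k+2}\cdots x_{k+4}\,x_{k+3}\,x_{k+4}^{-1}\cdots x_{2k+2}^{-1}.
\]
(Incidentally, $\Delta_{n+2}$ is the Garside half-twist, not the full twist.) The commutator $[x_i,x_j]=e$ does not drop out of this by combining with the first branch-point relation alone; it is obtained only after repeatedly simplifying both sides using the conjugation-free cyclic relations coming from the triple points. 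That simplification, not the braid computation, is where the parity enters --- and it forces a further split into $n\equiv 1\pmod 4$ versus $n\equiv 3\pmod 4$, yielding different commutators ($[x_k,x_{2k+2}]=e$ versus $[x_k,x_{k+1}]=e$). Your outline skips this step entirely.

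Second, your final ``propagation'' step is misstated and would not prove what you need. Property~\ref{propH3}(3) holds only modulo $[G,[G,G]]$, so invoking it to turn cyclic relations into $t_{a,b}=t_{b,c}=t_{c,a}^{-1}$ shows at best that $G/G_3$ is abelian, not $G$. The paper avoids this by a geometric argument: once one cyclic relation at a triple point $t$ dissolves into three honest commutators, one may perturb a line through $t$ so that the graph of the new arrangement has no cycle, and then Theorem~\ref{thmRealCLarrBetaZero} gives abelianness directly. A direct group-theoretic propagation can be made to work (since $[a,b]=e$ together with the genuine cyclic relation $[a,b,c]=e$ does give $[a,c]=[b,c]=e$ in $G$, not just mod $G_3$), but you would still need to handle the two leftmost triple points, whose induced relations are \emph{not} shown to be conjugation-free; the paper's geometric shortcut sidesteps that issue.
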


\begin{proof}
Let $P_n$ be a regular $n$-gon where $n=2k+1$, bounded by the circle $C = \{x^2+y^2=1\}$ in $\RR^2$, such that there is one edge parallel to the $x$-axis. Extend all the edges of $P_n$ to infinite straight lines, and rotate this arrangement by an $\varepsilon$ degrees ($0<\varepsilon  \ll 1$) clockwise, where the center of the rotation is at the point $(0,0)$. Denote by $\mathcal{A}_n$ the resulting complexified CL arrangement.

Let $\ell = \{y=a\}$, $a \ll 0$, be such that all the singular points of $\mathcal{A}_n$ are above $\ell$ (where $\ell$ is the reference line) and consider the projection $\pi:\C^2 \to \ell,$ defined by $ (x,y)\mapsto x $. Let $\text{Sing}$ be the images of the singular points of $\mathcal{A}_n$ with respect to $\pi$.  Numerate the images of the triple points from right to left and choose $p \in \ell$ such that   $p$ is between the image of the first and the second triple point, see Figure \ref{11linesConic_fig} for the case $n=11$. Note that we used the same approach for $n=3$ in the proof of Proposition \ref{prsG3G4}(a).
The point $p$ will be the basepoint for $\pi_1(\ell - {\rm Sing},p)$.

\begin{figure}[h!]
\epsfysize 8cm
\epsfbox{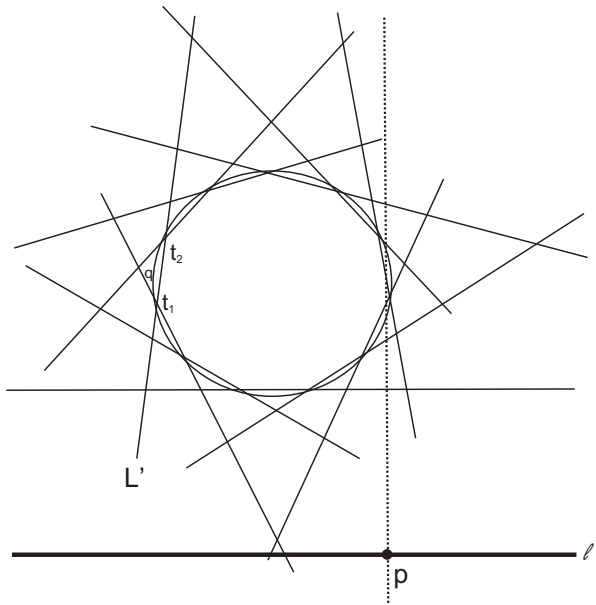}
\caption{The CL arrangement $\A_{11}$, where the point $q$ is the left branch point of the circle.}\label{11linesConic_fig}
\end{figure}

In order to compute $M_n$, we use the braid monodromy technique. We prove that $M_n$ is abelian in four steps: first, we  compute  the relation induced by the second branch point of the circle, denoted by $q$ (i.e. the branch point to the left of $p$). Second, we show that the triple points (except for the two leftmost triple points) always induce a relation without conjugations of the form $[a,b,c]=e$, where $a,b,c$ are geometric generators of $M_n$, induced by the base of $\pi_1(\pi^{-1}(p)~-~(\pi^{-1}(p) \cap \mathcal{A}_n))$.
Third, we show that the second branch point of the circle induces a commutative relation $[x,y]=e$, where $y$ is a generator corresponding to a line and $x$ is a generator corresponding to the circle. The fourth step shows that the combination of the former two steps dissolves one of the cyclic relations into commutative relations, and thus ``dissolving'' the cycle in the graph into a tree. Finally, this yields that $M_n$ is abelian.

As before note that, $x_1,\ldots,x_{2k+2}$ are the geometric generators of $M_n$.
\medskip

\noindent
{\bf First step:}
Let $q$ be the branch point of the circle (with respect to $\pi$) to the left of $p$. Then, the initial skeleton of $q$ is the segment $[k+2,k+3]$ and the first two braids that are applied on it are $\Delta \langle k,k+2 \rangle$ and $\Delta \langle k+2,k+4 \rangle$, corresponding to the two triple points $t_1,t_2$ that are located to the right of $q$ (with respect to the projection). Explicitly, $t_1$ is the first triple point to the right of $q$ (below the $x$-axis), and $t_2$ is the second (above the $x$-axis); see Figure \ref{11linesConic_fig} for the case  $n=11$. After this application, the skeleton looks  as in Figure \ref{initialSkel}(a).

 \begin{figure}[h!]
\epsfysize 3cm
\epsfbox{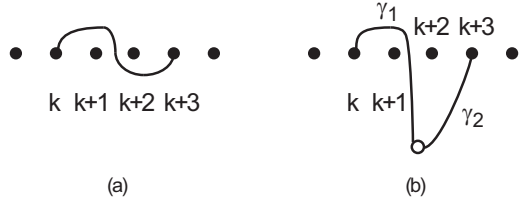}
\caption{The skeleton of the point $q$ after applying the first two braids.}\label{initialSkel}
\end{figure}

Now we have to check what is the effect of the Lefschetz diffeomorphism  (induced by the other nodes and triple points) on this skeleton (till the point $p$). We cut this skeleton into two paths, as depicted in Figure \ref{initialSkel}(b). We do that as all the braids that are induced by the singular points which are below the $x$-axis affect only the left path $\gamma_1$, while the braids  induced by the singular points which are above the $x$-axis  affect only the right path $\gamma_2$. Therefore, it  remains to find out what are the braids that are being applied on the left and  right paths.

We start by examining the effect of the braids induced by the singular points which are below the $x$-axis. Note that if we, for a moment, remove the circle $C$, then the sequence of braids we apply is in fact the sequence of braids we get by considering a generic   line arrangement of $k$ lines, composed of the $k$ lines passing through the triple points on the circle below the $x$-axis, i.e. the composition of the sequence of braids is in fact $\Delta_k \in B_k$ (where the group $B_k$ is generated by $\sigma_i,\, 1 \leq i \leq k-1$). Adding the section of the circle from $t_1$ till the right branch point (i.e. the branch point to the right of $p$) corresponds to replacing every instance of $\sigma_{k-1}$ by $\sigma_{k}\sigma_{k-1}\sigma_{k}$, since the circle only passes through the upper nodes of this generic arrangement.

Therefore, the sequence of braids being applied on $\gamma_1$ is $\Delta'_k$. By Lemma \ref{lemDelta}, this means that after applying $\Delta'_k = \Delta_{k+1}\cdot \sigma_2 \cdots \sigma_{k-1}$ on $\gamma_1$, we get the path presented in Figure \ref{FinalSkel_final}(a) (recall that we apply the braids from right to left).

\begin{figure}[h!]
\epsfysize 2.5cm
\epsfbox{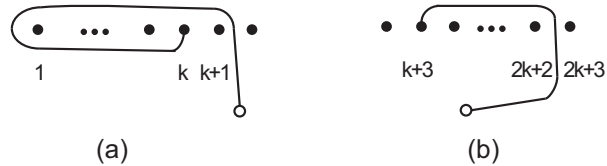}
\caption{The final parts of the skeleton associated to the point $q$.}
\label{FinalSkel_final}
\end{figure}

As for the braids that are being applied on the path $\gamma_2$, denote by $L'$ the line connecting  the two triple points
$t_1$ and $t_2$ (see Figure \ref{11linesConic_fig}). Note that the braids induced by the nodes that are on $L'$ and to the right of $t_1$ are being applied on $\gamma_2$ before any braid induced by any other singular point above the $x$-axis. But these braids do not affect $\gamma_2$: they all affect only the points with index greater than $k+3$. This means that we can ignore the line $L'$ when looking on the braids applied on $\gamma_2$, and hence  we can apply a similar argument to the one used for $\gamma_1$. We have again a generic arrangement of $k$ lines and a half-circle now passing through the lowest nodes. A similar check shows that after applying the sequence of the appropriate braids on $\gamma_2$, we get the path  depicted in Figure \ref{FinalSkel_final}(b). Thus, the final skeleton of the second branch point $q$ of the circle is presented in Figure \ref{FinalSkel}.

\begin{figure}[h!]
\epsfysize 1.5cm
\epsfbox{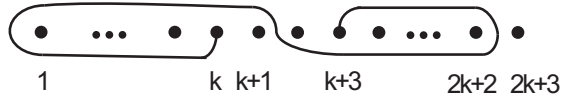}
\caption{The final skeleton associated to the branch point $q$.}
\label{FinalSkel}
\end{figure}

Therefore, the relation in $M_n$ induced by the second branch point $q$ is:
$$x_{k+1}x_k\cdots x_2x_1x_kx^{-1}_1x^{-1}_2 \cdots x^{-1}_kx^{-1}_{k+1} =x_{2k+2} \cdots x_{k+4}x_{k+3}x^{-1}_{k+4}\cdots x^{-1}_{2k+2},$$

\medskip

\noindent {\bf Second step:} We want to show that the relations induced by the triple points (except for the two leftmost two triple points) have no conjugations in $M_n$.
Again, we consider two cases: where the triple points (to the left of $p$) are below the $x$-axis, and where they are above.
 Obviously, the relations induced by the triple point to the right of $p$ have no conjugations, since this triple point is the closest point to the basepoint (from its right side), and hence there is no braid that is applied on its  initial skeleton.

Consider now the triple points (to the left of $p$) which are below the $x$-axis. Excluding the point $t_1$ from the computation (the reason for this will become clear in the third step), we numerate them from right to left by $p_1, \ldots ,p_{k-1}$. For the point $p_1$, its Lefschetz pair is $[k-1,k+1]$ and no braid is applied on it, so its induced relation is:
\begin{equation}\label{eqn_x}
[x_{k-1}, x_k, x_{k+1}]=e.
\end{equation}
Note that the  Lefschetz pair of each point $p_i$, $2 \leq i \leq k-1$, is also $[k-1,k+1]$ (see Figure \ref{11linesConic_fig2} for an illustration of  the case  $n=11$). For each $i$, let $\ell_{i_1},\ell_{i_2}$ be the two lines passing through $p_i$. For each $i$, $2 \leq i \leq k-1$, note that $\LL_i = \bigcup_{j = 1}^i \{\ell_{j_1},\ell_{j_2}\}$ is a generic arrangement of $i+1$ lines (as was already noted in the first step). Let us fix an $i$.

\begin{figure}[h!]
\epsfysize 8cm
\epsfbox{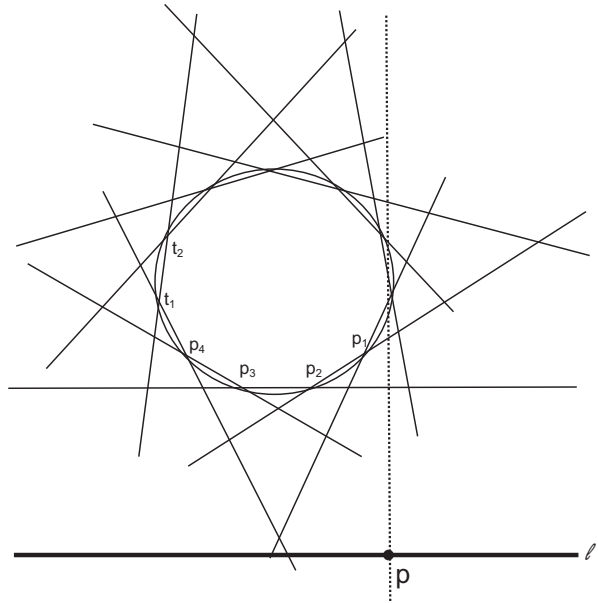}
\caption{The numeration of the triple points for the arrangement $\A_{11}$.}\label{11linesConic_fig2}
\end{figure}

If we consider only  the line arrangement $\LL_i$, then the Lefschetz pair of $p_i$ is $[i,i+1]$, and the composition of the
sequence of braids (in $B_{i+1}$) which are applied on this skeleton (in order to get the final skeleton) is in fact $\Delta_{i+1}\cdot \sigma^{-1}_i$ (we apply the braids on the skeleton from right to left). Note that in fact, applying $\sigma_i$ on the skeleton does not change it -- as this is a counterclockwise rotation of the points $i$ and $i+1$ by  $180^\circ$, so the skeleton remains the same. Thus, we can say that the braid that is applied on $[i,i+1]$ is $\Delta_{i+1}$. Now, drawing again the circle $C$ through $p_i$ till $p_{\, 1}$ corresponds to the fact that the Lefschetz pair (in the arrangement $\LL_i \cup C$) is now $[i,i+2]$ and, similar to the argument used in the first step, the braid that is applied on this skeleton  (in order to get the final skeleton) is $\Delta'_{i+1}$.
This means that the final skeleton (in the arrangement $\LL_i \cup C$) is  depicted in Figure \ref{FinalSkel3}(a).
Looking at the arrangement $\mathcal{A}_n$, let $n_i, n_i+1$ be the global numeration of the lines (in the fiber over $p$) passing through $p_i$. Thus, the final skeleton in $\mathcal{A}_n$ of the point $p_i$ is  depicted in Figure \ref{FinalSkel3}(b) (note that the
point $k$ in the fiber over  $p$ corresponds to the conic). Now, by Remark \ref{triple_rel_gen}, the  relation induced by $p_i$ is $[x_{n_i}, x_{n_i+1},x_k]=e$, as needed.

\begin{figure}[h!]
\epsfysize 2.5cm
\epsfbox{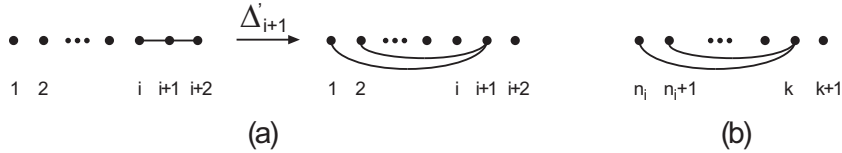}
\caption{The final skeleton of the point $p_i$.}
\label{FinalSkel3}
\end{figure}

The argument for the triple points above the $x$-axis (except for $t_2$) is almost the same, and hence omitted.

\medskip
\noindent {\bf Third step:}
Recall that the relation which was induced by the second branch point $q$ is:
\begin{equation} \label{eqnRelBranch}
x_{k+1}x_k\cdots x_2x_1x_kx^{-1}_1x^{-1}_2 \cdots  x^{-1}_kx^{-1}_{k+1}
= x_{2k+2} \cdots x_{k+4}x_{k+3}x^{-1}_{k+4}\cdots x^{-1}_{2k+2}. \end{equation}

Note also that the relation induced by the branch point of the circle to the right of $p$ is $x_{k} = x_{k+3}$.

\medskip

We now split our treatment according to the remainder of $n$ modulo 4. Since $n$ is odd, the remainder can be either 1 or 3.

\medskip

\noindent
\emph{Case (1)}:
Assume that $ n \equiv 3 (\text{mod } 4)$ (note that we can assume that $n>3$, i.e. $k>1$, since the case $n=3$ was already treated in Proposition \ref{prsG3G4}(a)). Then, the right hand side of Equation (\ref{eqnRelBranch}) is a conjugation of $x_{k+3}$ by the expression: $$x_{2k+2}x_{2k+1}\cdots x_{k+5} x_{k+4} = \prod_{m=2k+2,\ m\equiv 0({\rm mod} 2)}^{k+5} (x_m x_{m-1}).$$
Each pair of indices $m,m-1$
in the product corresponds to two consecutive lines (in the global numeration in the fiber over $p$), that intersect the circle in a triple point. By the second step, we know that  $[x_{m-1}, x_m,x_{k+3}]=e$ and therefore $x_m x_{m-1}$ commutes with $x_{k+3}$. This means that the right hand side can be simplified to $x_{k+3}$. Note that we have not used the cyclic relation induced by the triple point $t_2$, which involves (possibly  conjugations of) the generators $x_{k+3},x_{2k+2},x_{2k+3}$.

As for the left hand side of Equation (\ref{eqnRelBranch}), we can do the same procedure, until we get the following relation:
$$x_{k+1}x_kx_{k-1}x_{k-2}x_k x_{k-2}^{-1}x_{k-1}^{-1}x_k^{-1}x_{k+1}^{-1} = x_{k+3},$$ or
$$x_kx_{k-1}x_{k-2}x_k x_{k-2}^{-1}x_{k-1}^{-1}x_k^{-1} = x_{k+1}^{-1}x_{k+3}x_{k+1}.$$

Using the relation $[x_{k-2},x_{k-1},x_k]=e$ induced by the point $p_2$ (i.e. $x_{k-1}x_{k-2}x_k = x_kx_{k-1}x_{k-2}$), we get that:
$$x_k  = x_{k+1}^{-1}x_{k+3}x_{k+1} \text{ or } x_k  = x_{k+1}^{-1}x_{k}x_{k+1} \ \ \Rightarrow \ \ [x_k,x_{k+1}]=e.
$$

\medskip

\noindent
{\em Case (2)}:
Assume now that $ n \equiv 1 (\text{mod } 4)$. As in the previous case, the right hand side of Equation (\ref{eqnRelBranch}) can be simplified to
$x_{2k+2}x_{k+3}x_{2k+2}^{-1}$, and the left hand side can be simplified to $x_{k+1} x_k x_{k-1} x_k x_{k-1}^{-1} x_k^{-1} x_{k+1}^{-1}$, so we get:
$$
x_{k+1} x_k x_{k-1} x_k x_{k-1}^{-1} x_k^{-1} x_{k+1}^{-1} = x_{2k+2}x_{k+3}x_{2k+2}^{-1}.
$$

Now, the relation $[x_{k-1},x_k,x_{k+1}]=e$ is induced by the first triple point below the $x$-axis after the point $p$ (denoted by $p_1$ in the second step, see Equation (\ref{eqn_x})). Thus, we get:

$$x_{k+1} x_k x_{k-1} \cdot x_k x_{k-1}^{-1} x_k^{-1} x_{k+1}^{-1} = x_k \cdot x_{k-1} x_{k+1} x_k \cdot x_{k-1}^{-1} x_k^{-1} x_{k+1}^{-1} = $$
$$= x_k x_{k+1} x_k x_{k-1} x_{k-1}^{-1} x_k^{-1} x_{k+1}^{-1} = x_k.$$
Therefore:
$$x_k  = x_{2k+2}x_{k+3}x_{2k+2}^{-1} \ \ \Rightarrow  \ \ x_k  = x_{2k+2}x_{k}x_{2k+2}^{-1} \ \ \Rightarrow  \ \ [x_k, x_{2k+2}]=e.$$

Note that in both cases we got that the generator $x_k$ associated to the circle commutes with a generator associated to one of the lines.

\medskip

\noindent
{\bf Fourth step:} By the second step, $M_n$ has a cyclic relation of the form  $[x_k, x_{2k+1}, x_{2k+2}]=e$ and of the form $[x_{k-1},x_k,x_{k+1}]=e.$ In any case, the third step shows that one of these relations is dissolved into commutative relations: either to $[x_k, x_{2k+1}] = [x_k, x_{2k+2}] = [x_{2k+1}, x_{2k+2}]=e$ (in the case of $n\equiv 1$(mod $4$)) or to $[x_{k-1},x_k]= [x_{k-1},x_{k+1}]=[x_k,x_{k+1}]=e$ (in the case of $n\equiv 3$(mod $4$)). Denote by $t$ the triple point whose induced cyclic relation is dissolved and let $\ell_{t_1}, \ell_{t_2}$ be the two lines that pass through it. Let $t'$ be the other triple point that $\ell_{t_1}$ passes through it.
The fact that the cyclic relation turns into three commutative relations implies, from the perspective of the braid monodromy and the relations induced by it, that we can slightly rotate $\ell_{t_1}$ around $t'$ and still get an isomorphic fundamental group.

Let $U$ be a small neighborhood of $t$, i.e. $U \cap \mathcal{A}_n$ is an intersection of two lines and a circle at $t$. The slight rotation described above has the  effect on $U$ described in Figure \ref{nghbThreeLines}.

\begin{figure}[h!]
\epsfysize 2.5cm
\epsfbox{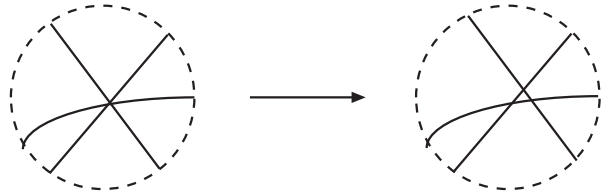}
\caption{The geometric effect of splitting the cyclic relation into commutative relations.}
\label{nghbThreeLines}
\end{figure}

However, this means that the graph of the revised arrangement has no cycles and all the triple points are on the conic, and by Theorem \ref{thmRealCLarrBetaZero}, the fundamental group is abelian, and we are done.
\end{proof}

\begin{corollary}
The arrangement $\A_{2k+1}$ is not conjugation-free, but is almost-conjugation-free.
\end{corollary}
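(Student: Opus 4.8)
The plan is to establish the two halves of the corollary separately: that $\A_{2k+1}$ is \emph{not} conjugation-free, and that it \emph{is} almost-conjugation-free. Both halves exploit the structural facts already extracted in the proof of Theorem \ref{prsGnOdd}, namely that $M_n$ is abelian but that reaching this conclusion required a genuinely conjugated relation (from the second branch point $q$) to collapse a cyclic relation.

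\emph{Not conjugation-free.} First I would argue exactly as in the case $n=3$ in Proposition \ref{prsG3G4}(a). If $M_n$ \emph{did} admit a conjugation-free geometric presentation, then by Definition \ref{CFGP-CLArr1} every induced relation would be either a commutator, a cyclic relation $[x_{i_1},\ldots,x_{i_t}]=e$, or an equality $x_i=x_j$, all \emph{without} conjugating exponents. In particular the relation coming from the second branch point $q$ could be written conjugation-free. But the only conjugation-free relation a branch point can yield is of the form $x_i = x_j$ (identifying the two meridians of the conic). Replacing the conjugated relation (\ref{eqnRelBranch}) by the corresponding unconjugated identification $x_k = x_{k+3}$ produces a candidate group $M_n^{\mathrm{cf}}$ whose defining relations are all the cyclic relations $[x_{i_1},x_{i_2},x_{i_3}]=e$ of the triple points together with $x_k=x_{k+3}$, \emph{with no collapsing of any cycle}. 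I would then show $M_n^{\mathrm{cf}}$ is non-abelian, so it cannot be isomorphic to the abelian $M_n$. The cleanest way is the central-extension argument used for $M_3^{\mathrm{cf}}$: setting $H \doteq M_n^{\mathrm{cf}}$, one computes that $H/H_3$ is a central extension of $H/H_2 \cong \Z^{2k+1}$ by $H_2/H_3$, and using Property \ref{propH3}(3) each triple point on the (still uncollapsed) cycle forces a nontrivial class in $H_2/H_3$, so $H_2/H_3 \neq 0$ and $H$ is non-abelian.

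\emph{Almost-conjugation-free.} For the second half I would send a single geometric generator $x_i$ corresponding to one of the lines to the identity, which geometrically deletes that line $\ell_i$ from $\A_{2k+1}$. Deleting one edge of the unique length-$n$ cycle breaks the cycle, so the graph $G(\A_{2k+1} \setminus \ell_i)$ becomes a tree; that is, $\beta(\A_{2k+1}\setminus\ell_i)=0$. The resulting arrangement is still a real CL arrangement with one conic and the triple points lying on the conic, so by Proposition \ref{prsCFless1}(2) its fundamental group $\pi_1(\CC^2 - (\A_{2k+1}\setminus\ell_i))$ has a conjugation-free geometric presentation. By Definition \ref{defAlmostCF}, this is precisely the statement that $M_n$ is almost-conjugation-free, which completes the proof.

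\emph{The main obstacle} I expect is making the non-abelianity of $M_n^{\mathrm{cf}}$ watertight in full generality, rather than just for $n=3$: one must verify that after imposing $x_k=x_{k+3}$ (but no cycle-collapsing), at least one commutator $t_{i,j}=[x_i,x_j]$ survives nontrivially in $H_2/H_3$. This requires checking that the cyclic relations of the triple points on the cycle do \emph{not} already force all such commutators to vanish modulo $H_3$ — equivalently, that the combinatorial count $v(G)$ of independent generators of $H_2/H_3$ (as in Section \ref{subsecCertainsQuotient}) stays positive once the cycle is retained. For the odd cycle this follows because the branch-point relation is the unique relation capable of dissolving the cycle, and without it the triple-point cyclic relations only interrelate, but never kill, the surviving commutators along the cycle.
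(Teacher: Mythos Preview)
Your proposal is correct and follows essentially the same approach as the paper: both halves match the paper's argument (the central-extension/nilpotent-quotient argument for non-conjugation-freeness, and deleting a line to break the cycle for almost-conjugation-freeness, invoking Proposition~\ref{prsCFless1}(2)). One small slip: the abelianization $H/H_2$ is $\Z^{2k+2}$, not $\Z^{2k+1}$ (there are $2k+1$ lines plus one conic, and the branch-point identification drops the rank by one from $2k+3$); and the paper in fact pins down $H_2/H_3\cong\Z$ (referring to the fourth step of Theorem~\ref{prsGneven} for the computation), not merely $H_2/H_3\neq 0$, though your weaker claim suffices.
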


\begin{proof}
Theorem \ref{prsGnOdd} implies that the arrangement $\A_{2k+1}$ is not conjugation-free: if it were, the second branch relation should have induced only the relation $x_{k} = x_{k+3}$ after the simplification process, and we would not be able to obtain that the generator associated to the circle commutes with a generator associated to one of the lines.

 Moreover, if we denote by $H$ the fundamental group $M_{2k+1}^{\text{cf}}$, which one would get from the arrangement $\A_{2k+1}$ if it were a conjugation-free arrangement, then we would get that $H$ is a central but non-abelian
extension of $H/[H,H] \cong \Z^{2k+2}$ by $[H,H]/[[H,H],H] \cong \Z$; thus, $H$ is not an abelian group, which is a contradiction (in order to see
more clearly that  $[H,H]/[[H,H],H] \cong \Z$, we refer the reader to the Fourth step of the proof of Theorem \ref{prsGneven}, where the same quotient is examined; thus we use the same methods from there).

 However, $\A_{2k+1}$ is almost-conjugation-free, as after sending any geometric generator, that corresponds to one of the lines, we get that the resulting arrangement has no cycle in its graph, and thus, by  Proposition \ref{prsCFless1}(2), it is conjugation-free. 
\end{proof}

\begin{remark} \label{rem_CF_str}
{\rm
 Note that during the simplification process of the relation induced by the second branch point $q$, we did not use the relation induced by the unique triple point $t_0$ located to the right of the point $p$. This means that if we draw additional lines, passing only through $t_0$ (with a very negative slope), the simplification process of the resulting presentation will be identical, and therefore it implies that we can regard this new arrangement as an arrangement with $\beta(\A)=0$. Thus the fundamental group is a direct sum of a free group with $m(t_0)-2$ generators (induced by the singular point $t_0$, where  $m(t_0)$ is the multiplicity of $t_0$), and a free abelian group.}
\end{remark}

We now proceed to the even case.
As was proven in Proposition \ref{prsG3G4}(b), the group $M_4$ is not abelian. We generalize this fact in the following theorem:

\begin{thm} \label{prsGneven}
For even $n=2k>4$, the group $M_n$ is not abelian.
\end{thm}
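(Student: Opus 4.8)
The plan is to exhibit a nontrivial element in a \emph{combinatorial} quotient of $G \doteq M_n$. Since $M_n$ abelian forces $G_2 = [G,G] = \{e\}$ and hence $G_2/G_3 = \{e\}$, it suffices to show that $G_2/G_3 \neq \{e\}$. The advantage of this reduction is that, as recorded in Section \ref{subsecCertainsQuotient}, both $G/G_3$ and $G_2/G_3$ have conjugation-free presentations determined purely by the intersection lattice of $\A_n$; consequently I may compute $G_2/G_3$ from the combinatorics, bypassing the conjugations that obstruct a full conjugation-free presentation of $M_n$ itself.

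First I would reuse the braid-monodromy computation of the odd case verbatim through its first two steps: the two branch points and the nodes are configured exactly as before, so the triple points (other than the two leftmost) again yield conjugation-free cyclic relations $[x_{n_i},x_{n_i+1},x_c]=e$ (with $x_c$ the conic meridian), and the second branch point $q$ again yields relation (\ref{eqnRelBranch}). The decisive difference lies in the analogue of the third step. Running the same simplification — replacing each consecutive pair $x_m x_{m-1}$ on either side of (\ref{eqnRelBranch}) by a factor commuting with the conic meridian, using the second-step cyclic relations — now leaves \emph{no} unpaired line-meridian, precisely because $n=2k$ is even. Hence both sides collapse onto the single conic meridian and the second-branch relation degenerates to the trivial identity already supplied by the right branch point, rather than to a commutator of the conic generator with a line as in the odd case. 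Thus, for even $n$, \emph{no} cyclic relation is dissolved: every triple point of the cycle retains its full cyclic relation.

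Next I would compute $G_2/G_3$ from these surviving relations. Label the lines cyclically so that $\ell_i,\ell_{i+1}$ and the conic meet at the vertex $v_i$ (indices mod $n$), and write additively $b_i \doteq [x_i,c]$ in the abelian group $G_2/G_3$. Non-adjacent lines meet only in nodes, so $[x_i,x_j]=e$ there; the node and branch relations therefore impose no condition on the $b_i$. At each vertex $v_i$, Property \ref{propH3}(3) turns the cyclic relation into
$$[x_i,x_{i+1}] = [x_{i+1},c] = -[x_i,c], \qquad \text{i.e.} \qquad b_{i+1} = -b_i,$$
so that $b_i = (-1)^{i-1}b_1$, and closing the cycle gives $b_1 = (-1)^n b_1$. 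For \emph{odd} $n$ this forces $2b_1 = 0$, hence $b_1 = 0$ in the torsion-free group $G_2/G_3$, recovering the vanishing consistent with Theorem \ref{prsGnOdd}; for \emph{even} $n$ the relation is vacuous, so $b_1$ survives and generates a free summand. Since every remaining generator of $G_2/G_3$ equals $\pm b_1$ or $0$, I would conclude $G_2/G_3 \cong \ZZ$. In particular $G_2/G_3 \neq \{e\}$, and because $[G/G_3,G/G_3] = G_2/G_3$, the quotient $G/G_3$ — a central extension of $G/G_2 \cong \ZZ^{n+1}$ by $\ZZ$ — is non-abelian; therefore $M_n$ is non-abelian.

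The main obstacle I anticipate is the precise sign-bookkeeping together with the correct handling of the two conic meridians. One must verify that the per-vertex choice of which meridian of the conic enters each cyclic relation, and the branch-point relations identifying the two conic meridians, are all compatible with a single consistent $\pm1$ labeling around the cycle — this is exactly where the parity of $n$ becomes essential — and that none of these relations, nor the node relations, secretly forces the surviving class $b_1$ to vanish when $n$ is even. Establishing the degeneration of the second-branch relation in the modified third step (the parity cancellation of the conjugating word) is the computational heart; once it is in place, reading $G_2/G_3$ off as the first homology of a signed even cycle closes the argument. (I note that the finer assertion that $M_n$ is not a direct sum of a free abelian group and free groups, where the threshold $n>4$ genuinely matters, would require the further analysis of $Z(G/G_3)$; for the non-abelian statement proved here the computation $G_2/G_3 \cong \ZZ$ already suffices.)
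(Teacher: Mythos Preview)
Your approach is essentially the paper's: both arguments verify that the second branch relation adds nothing new (your modified third step) and then compute $H_n=G_2/G_3\cong\ZZ$ by chasing the sign-alternation $b_{i+1}=-b_i$ around the even cycle of triple points (your fourth paragraph is exactly the paper's Fourth step, just in cleaner notation). One caveat on the third step: the ``pairing'' heuristic you describe is too coarse --- the paper must split into the cases $n\equiv 0$ and $n\equiv 2\pmod 4$, and in the former it does \emph{not} reduce by pairing alone but first multiplies by an auxiliary generator $x_{k+1}$ and uses extra node-commutators along a distinguished line $L_0$; you were right to flag this as the computational heart, and that is precisely where the real work lies.
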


\begin{proof}

We first change a bit the construction of the arrangement $\A_n$, such that there would be no parallel lines. Consider the circle $C = \{x^2+y^2=1\}$ and choose $k$ points, numerated clockwise by $v_1,\ldots,v_k$, on the halfcircle above the $x$-axis,  such that the distances $d(v_i,v_{i+1})$, $d(v_{i+1},v_{i+2})$ are the same for $1 \leq i \leq k-2$.
Draw a line through $v_1$  with a very positive slope $s_1$, that intersects the circle in an additional point (below the $x$-axis), denoted by $v'_1$.
Draw another line through $v_k$   with a very negative slope $s_2 \neq -s_1$, that intersects the circle in an additional point, denoted by $v'_k$.
Now, let $v'_2,\ldots,v'_{k-1}$ be another $k-2$ points on $C$, on the half-circle below the $x$-axis, such that the distances $d(v'_i,v'_{i+1})$, $d(v'_{i+1},v'_{i+2})$ are the same for $1 \leq i \leq k-2$.

We now have  $2k$ points on $C$. Connect them by segments and denote by
 $P_n$ the resulting $n$-gon which is bounded by the circle $C = \{x^2+y^2=1\}$ in $\RR^2$.
Extend all the edges of $P_n$ to infinite straight lines and denote by $\mathcal{A}_n$ the resulting complexified CL arrangement.

We use the same notations as in Theorem \ref{prsGnOdd}, i.e. the basepoint $p$ is chosen between the image of the first and the second
triple point (see Figure \ref{8linesConic_2} for the case $n=8$).
As before, $x_1,\ldots,x_{2k+2}$ are the geometric generators of $M_n$.

\begin{figure}[h!]
\epsfysize 8.5cm
\epsfbox{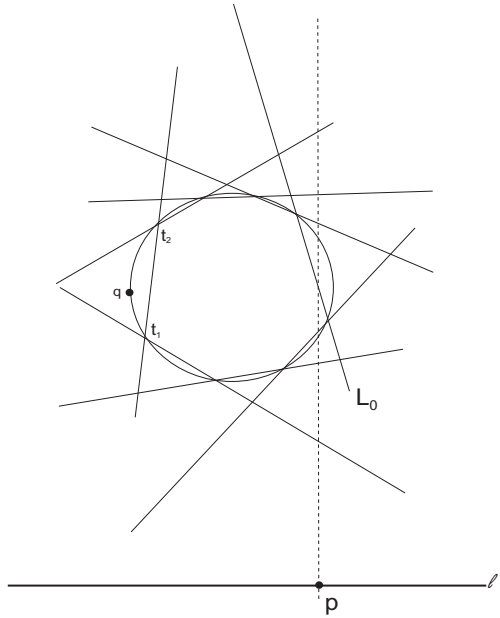}
\caption{The CL arrangement $\A_8$.}
\label{8linesConic_2}
\end{figure}

\medskip

In order to show that $M_n$ is not abelian, we use the braid monodromy technique as in the odd case. We prove this in four steps: first, we  compute  the relation induced by the second branch point of the circle, denoted by $q$ (i.e. the branch point to the left of $p$). Second, we show that the triple points (except for the two leftmost triple points) always induce a relation of the form $[a,b,c]=e$ (without conjugations), where $a,b,c$ are geometric generators of $M_n$, induced by the base of $\pi_1(\pi^{-1}(p)~-~(\pi^{-1}(p) \cap \mathcal{A}_n))$. Note that the first two steps
are parallel to the first two steps of Theorem \ref{prsGnOdd}. Third, we show that the second branch point of the circle $q$ induces  either a trivial relation (i.e. of the form $a = a$) or the same relation as induced by the first branch point, to the right of $p$. In the fourth step, we use the previous steps to show that $M_n$ is not abelian using its quotients.

\medskip
\noindent {\bf First step:}
Let $q$ be the branch point of the circle (with respect to the projection $\pi$) to the left of $p$. Then, the initial skeleton of $q$ is the segment $[k+1,k+2]$ and the first two braids that are applied on it are $\Delta \langle k-1,k+1 \rangle$ and $\Delta \langle k+1,k+3 \rangle$, corresponding to the two triple points $t_1,t_2$ that are located to the right of $q$ (with respect to the projection). Explicitly, $t_1$ is the first triple point to the right of $q$ (below the $x$-axis), and $t_2$ is the second (above the $x$-axis); see Figure \ref{8linesConic_2} for the case  $n=8$. After the application of these braids, the skeleton looks as in Figure \ref{initialSkel_even}(a).

 \begin{figure}[h!]
\epsfysize 3cm
\epsfbox{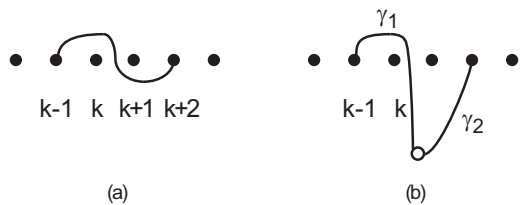}
\caption{The skeleton of the point $q$ after applying the first two  braids.}\label{initialSkel_even}
\end{figure}

Now, we have to check what is the effect of the Lefschetz diffeomorphism  (induced by the other nodes and triple points) on this skeleton (until the point $p$). However, we can proceed exactly as in the first step of the proof of Theorem \ref{prsGnOdd}. Explicitly, we cut this skeleton into two parts $\g_1$ and $\g_2$, as depicted in Figure \ref{initialSkel_even}(b).  By the same arguments as in
Theorem \ref{prsGnOdd}, the sequence of braids being applied on $\g_1$ is $\Delta'_{k-1}$, while
the sequence of braids being applied on $\g_2$ is $\Delta'_{k}$. This is since there are $k-2$ triple points below the $x$-axis, not including $t_1$ and the triple point to the right of $p$, while there are $k-1$ triple points above the $x$-axis, not including $t_2$.
This means that the final skeleton of the second branch point $q$ of the circle is presented in Figure \ref{FinalSkel_even}. Note that, in contrast to the case of odd $n$, the final skeleton of this case circumscribes  a different number of points in its left part than in its right part.

\begin{figure}[h!]
\epsfysize 1.5cm
\epsfbox{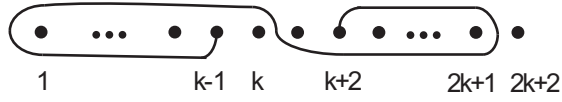}
\caption{The final skeleton associated to the branch point $q$.}
\label{FinalSkel_even}
\end{figure}

Therefore, the relation in $M_n$ induced by the second branch point $q$ is:
\begin{equation} \label{eqnRelBranch_even}
x_{k}x_{k-1}\cdots x_2x_1x_{k-1} x^{-1}_1x^{-1}_2 \cdots x^{-1}_{k-1} x^{-1}_{k} =x_{2k+1} \cdots x_{k+3}x_{k+2}x^{-1}_{k+3}\cdots x^{-1}_{2k+1}.
\end{equation}

\medskip

\noindent {\bf Second step:}
We want to show that the relations induced by the triple points (except for $t_1$ and $t_2$) have no conjugations; however, as can be easily seen, the proof of the corresponding step in
Theorem \ref{prsGnOdd} is independent of the parity of $n$ and thus we can use the same proof.

\medskip

\noindent {\bf Third step:}
Recall that the relation induced by the first branch point (to the right of $p$) is
\begin{equation}\label{relFirstBrPoint}
x_{k-1} = x_{k+2}.
\end{equation}
We now want to prove that relation (\ref{eqnRelBranch_even}) is either redundant or equivalent to
the relation induced by the first branch point. We again split our treatment according to the remainder of $n$ modulo~$4$. In this case, the remainder can be either $0$ or $2$.

\medskip

\noindent
\emph{Case (1)}: Assume that $n \equiv 2$(mod $4$). The right hand side of Equation (\ref{eqnRelBranch_even}) is a conjugation of $x_{k+2}$ by the expression: $$x_{2k+1}x_{2k}\cdots x_{k+4} x_{k+3} = \prod_{m=2k+1,\ m\equiv 0({\rm mod} 2)}^{k+4} (x_m x_{m-1}).$$
Each pair of indices $m,m-1$
in the product corresponds to two consecutive lines (in the global numeration in the fiber over $p$), that intersect the circle in a triple point. By the second step, we know that  $[x_{m-1}, x_m,x_{k+2}]=e$ and therefore $x_m x_{m-1}$ commutes with $x_{k+2}$. This means that the right hand side of Equation (\ref{eqnRelBranch_even}) can be simplified to $x_{k+2}$.

As for the left hand side of Equation (\ref{eqnRelBranch_even}), we can do the same procedure, until we get that the left hand side is equal to
$$x_{k} x_{k-1} x_{k-2}  x_{k-1} x_{k-2}^{-1} x_{k-1}^{-1} x_k^{-1}.$$

Now, by the cyclic relation $[x_{k}, x_{k-1}, x_{k-2}]=e$ induced by the first triple point to the left of $p$ below the $x$-axis, we have the following two relations: $$x_{k} x_{k-1} x_{k-2} = x_{k-1} x_{k-2} x_k \text{ and } x_{k} x_{k-1} x_{k-2}^{-1} = x_{k-2}^{-1} x_{k} x_{k-1}.$$
Therefore:

$$x_{k} x_{k-1} x_{k-2}  x_{k-1} x_{k-2}^{-1} x_{k-1}^{-1} x_k^{-1} =  x_{k-1} x_{k-2} x_k  x_{k-1} x_{k-2}^{-1} x_{k-1}^{-1} x_k^{-1} =$$$$
x_{k-1} x_{k-2} x_{k-2}^{-1} x_{k} x_{k-1} x_{k-1}^{-1} x_k^{-1} = x_{k-1},$$
and we get the known relation $x_{k-1} = x_{k+2}$.

\medskip
\noindent
\emph{Case (2)}: Assume that $n \equiv 0$(mod $4$). We now apply a different method than the method used in {Case (2)} of the third step of Theorem \ref{prsGnOdd}. Consider the nodes created by the intersection of the line $L_0$ numbered $k+1$ (in the fiber over $p$) with the other lines above the $x$-axis (see Figure \ref{8linesConic_2}). Using the Moishezon-Teicher algorithm, it is easy to show that the relations induced by these nodes
are
\begin{equation} \label{RelCommuteX}
[x_{k+1}, x_i]=e, \, k+4 \leq i \leq 2k+2.
\end{equation}

Now,  multiply relation (\ref{eqnRelBranch_even}) by $x_{k+1}$ from the left:
\begin{equation} \label{eqnRelBranch_even2}
x_{k+1}x_{k}x_{k-1}\cdots x_2x_1x_{k-1} x^{-1}_1x^{-1}_2 \cdots x^{-1}_{k-1} x^{-1}_{k} = x_{k+1}x_{2k+1} \cdots x_{k+3}x_{k+2}x^{-1}_{k+3}\cdots x^{-1}_{2k+1}.
\end{equation}

Let us look at the right hand side of relation (\ref{eqnRelBranch_even2}). By relations (\ref{RelCommuteX}), we can diffuse
$x_{k+1}$ till we get
\begin{equation} \label{eqnRelBranch_even2_rhs}
x_{2k+1}x_{2k} \cdots x_{k+5}x_{k+4} x_{k+1}x_{k+3}x_{k+2}x^{-1}_{k+3}x^{-1}_{k+4}\cdots x^{-1}_{2k+1}.
\end{equation}
Now, by the cyclic relation $[x_{k+1}, x_{k+2}, x_{k+3}]=e$ induced by the first triple point to the left of $p$ above the $x$-axis, we have that $x_{k+1}x_{k+3}x_{k+2} = x_{k+2}x_{k+1}x_{k+3} $, and thus  expression (\ref{eqnRelBranch_even2_rhs}) is transformed to
$$
x_{2k+1}x_{2k} \cdots x_{k+5}x_{k+4} x_{k+2}x_{k+1}x_{k+3}x^{-1}_{k+3}x^{-1}_{k+4}\cdots x^{-1}_{2k+1} $$$$ = x_{2k+1}x_{2k} \cdots x_{k+5}x_{k+4} x_{k+2}x_{k+1}x^{-1}_{k+4}\cdots x^{-1}_{2k+1} $$$$ \overset{\rm{Eqn.}\, (\ref{RelCommuteX})}{=} x_{2k+1}x_{2k} \cdots x_{k+5}x_{k+4}x_{k+2}x^{-1}_{k+4}\cdots x^{-1}_{2k+1}x_{k+1}.
$$

Now, by the cyclic relations, induced by the triple points above the $x$-axis, the generator $x_{k+2}$ commutes with $x_m x_{m-1}$ where
$m \in \{ 2k+1,2k-1,\ldots,k+5 \}$. Thus,
$$
x_{2k+1}x_{2k} \cdots x_{k+5}x_{k+4}x_{k+2}x^{-1}_{k+4}\cdots x^{-1}_{2k+1}x_{k+1} =$$$$ x_{k+2} x_{2k+1}x_{2k} \cdots x_{k+5}x_{k+4}x^{-1}_{k+4}\cdots x^{-1}_{2k+1}x_{k+1} = x_{k+2} x_{k+1}.
$$

That is, the right hand side of relation (\ref{eqnRelBranch_even2}) is equal to $x_{k+2} x_{k+1}$. Let us now deal with the left hand side
of relation (\ref{eqnRelBranch_even2}), which equals to:

$$
x_{k+1}x_{k}x_{k-1}x_{k-2}\cdots x_2x_1x_{k-1} x^{-1}_1x^{-1}_2 \cdots x^{-1}_{k-1} x^{-1}_{k}.
$$

Recall that the cyclic relation induced by the (unique) triple point to the right of $p$ is
$[x_{k-1},x_{k},x_{k+1}]=e$. Thus $$x_{k+1}x_{k}x_{k-1} = x_{k-1} x_{k+1}x_{k} \overset{\rm{Eqn.}\, (\ref{relFirstBrPoint})}{=} x_{k+2}  x_{k+1}x_{k}$$ and therefore relation (\ref{eqnRelBranch_even2}) gets the following form:

$$
x_{k+2}  x_{k+1}x_{k}x_{k-2}\cdots x_2x_1x_{k-1} x^{-1}_1x^{-1}_2 \cdots x^{-1}_{k-1} x^{-1}_{k} = x_{k+2} x_{k+1},
$$

\noindent
or to:
$$
x_{k-2}x_{k-3}\cdots x_2x_1x_{k-1} x^{-1}_1x^{-1}_2 \cdots x^{-1}_{k-2} x^{-1}_{k-1} = e.
$$

However, by  the cyclic relations, induced by the triple points below the $x$-axis, the generator $x_{k-1}$ commutes with $x_m x_{m-1}$ where $m \in \{ k-2,k-4,\ldots,2\}$, and therefore the above relation is transformed to
$$
x_{k-1} x_{k-2}x_{k-3}\cdots x_2x_1 x^{-1}_1x^{-1}_2 \cdots  x^{-1}_{k-2} x^{-1}_{k-1} = e \,\,\,\, \Rightarrow  \,\,\,\, e=e.
$$

\medskip
\noindent {\bf Fourth step:}
We now show that $H_n \doteq [M_n,M_n]/[M_n,[M_n,M_n]]$ is  isomorphic to $\ZZ$. This would imply that the commutator of $M_n$ contains non-trivial
elements of $M_n$, i.e. $M_n$ is not abelian.
%
%

Note that besides the two relations induced by the branch points, all the other relations of $M_n$ are either
cyclic or commutative relations, possibly with conjugations (where the cyclic relations are induced only by the triple points). Property \ref{propH3}(1) implies that $M_n/[M_n,[M_n,M_n]]$ is conjugation-free (and thus also $H_n$) and
that every cyclic relation, induced by a triple point, can be presented as three equalities between commutators, by Property \ref{propH3}(3).

Recalling from Section \ref{subsecCertainsQuotient}, $H_n$ is generated
by the commutators $t_{i,j} \doteq [x_i,x_j]$ where $i<j$.  Note that $t_{i,j} = t^{-1}_{j,i}$. Moreover, if the  relation  $[x_i,x_j^{\g}]=e$ holds in $M_n$, where $\g \in M_n$, then in $H_n$ we have that $t_{i,j} = e$ (by Property \ref{propH3}(1)).

Moreover, since $\A_n - C$ is a generic  arrangement of $2k$ lines, all the relations in $\pi_1(\C^2 - (\A_n - C),p)$ are commutative relations, and therefore, the only relations in $M_n$ that do not vanish in $H_n$ come from the triple points, inducing cyclic relations. Recall also that in any case, $x_{k-1} = x_{k+2}$.
 Let us  list all the cyclic relations in $H_n$ (see also Figure \ref{10linesConic} for the case $2k=10$):

\begin{enumerate}
\item[(i)] Relations including the generator $x_{k+1}$:\\
(1) Below the $x$-axis: $$[x_{k-1},x_k,x_{k+1}] = e,$$
(2) Above the $x$-axis: $$[x_{k+1},x_{k+2},x_{k+3}] = e.$$
\item[(ii)] (Other) Relations induced by points above the $x$-axis: $$[x_{k+2},x_{k+3},x_{k+4}] = [x_{k+2},x_{k+4},x_{k+5}] = \cdots = [x_{k+2},x_{2k+1},x_{2k+2}] = e.$$
\item[(iii)] Another relation (induced by the point $t_1$) including the generator $x_{2k+2}$: $$[x_1,x_{k-1},x_{2k+2}] = e.$$
\item[(iv)]  Relations induced by points below the $x$-axis: $$[x_1,x_2,x_{k-1}] = \cdots = [x_{k-4},x_{k-3},x_{k-1}] = [x_{k-3},x_{k-2},x_{k-1}] = e.$$
\item[(v)] The  relation induced by the first triple point below the $x$-axis to the right of $p$: $$[x_{k-2},x_{k-1},x_k]=e.$$
\end{enumerate}

\begin{figure}[h!]
\epsfysize 5.5cm
\epsfbox{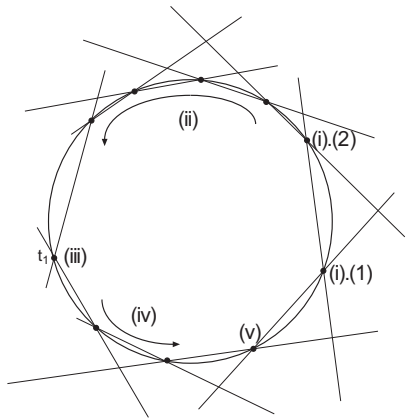}
\caption{An illustration for the  relations (with no conjugations in $H_n$) induced by the triple points, numerated counterclockwise, for the case $2k=10$.}
\label{10linesConic}
\end{figure}

Now, by Property \ref{propH3}(3) and by $x_{k-1} = x_{k+2}$, we see that in $H_n$ there is only one independent commutator, e.g. $y = t_{1,2}$, as all the others are either equal to it or to its inverse. Since the presentation of $M_n$ which was computed by the Zariski-van Kampen method is complete (i.e. there are no additional relations), we get that in $H_n$ there are no relations on $y$. Thus $H_n \cong \Z$ and therefore $M_n$ in not abelian.
\end{proof}

Note that in Proposition \ref{prsG3G4}(b), we proved that $M_4 \cong \Z^3 \oplus \FF_2$. We now prove that this is in fact an exceptional case.

\begin{thm} \label{prsGnevenDirectSum}
For even $n=2k>4$, the group $M_n$ is not isomorphic to a direct sum of a free abelian group and free groups.
\end{thm}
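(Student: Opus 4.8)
The plan is to argue by contradiction, separating any hypothetical summands by means of two invariants that behave transparently on direct sums: the quotient $(M_n)_2/(M_n)_3$ and the center $Z(M_n/(M_n)_3)$. This is exactly the pair of quotients whose combinatorial nature was recorded in Section \ref{subsecCertainsQuotient}.

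First I would pin down the shape of a putative decomposition. Suppose $M_n \cong \Z^a \oplus F_{b_1} \oplus \cdots \oplus F_{b_s}$, with each $b_i \geq 2$ (the factors $F_1 \cong \Z$ and $F_0$ being absorbed into the free abelian part). Lower central quotients split over direct sums, and $(F_b)_2/(F_b)_3 \cong \Z^{\binom{b}{2}}$, so $(M_n)_2/(M_n)_3 \cong \bigoplus_i \Z^{\binom{b_i}{2}}$. By the Fourth step of Theorem \ref{prsGneven} this group is $\cong \Z$, whence $\sum_i \binom{b_i}{2}=1$, forcing $s=1$ and $b_1=2$. Comparing abelianizations, $M_n/(M_n)_2 \cong \Z^{2k+1}$ gives $a+2 = 2k+1$, so the only possibility is $M_n \cong \Z^{2k-1}\oplus F_2$. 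It therefore suffices to distinguish $M_n$ from $\Z^{2k-1}\oplus F_2$.

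Next I would use the center of the second nilpotent quotient. Since $(M_n)_2$ is central in $M_n/(M_n)_3$, the commutator descends to an alternating form $c\colon M_n/(M_n)_2 \times M_n/(M_n)_2 \to (M_n)_2/(M_n)_3 \cong \Z$, and every element of $\mathrm{rad}(c)$ lifts to a central element, giving a central extension $0 \to \Z \to Z(M_n/(M_n)_3) \to \mathrm{rad}(c) \to 0$; hence $\mathrm{rank}\,Z(M_n/(M_n)_3) = 1 + \dim_{\Q}\ker c$. For $\Z^{2k-1}\oplus F_2$ the form $c$ is the standard symplectic form carried by the $F_2$-factor, of rank $2$, so $\mathrm{rad}(c)$ has rank $2k-1$ and the center has rank $2k$. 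The whole problem thus reduces to showing that for $M_n$ the rank of $c$ exceeds $2$, i.e. $\dim\ker c < 2k-1$, whenever $n>4$. To evaluate $c$ I would read the matrix off the cyclic relations listed in the Fourth step of Theorem \ref{prsGneven}: by Property \ref{propH3}(3) each triple point gives $t_{a,b}=t_{b,c}=-t_{a,c}$, and the only nonzero commutators come from the $2k$ vertices of the polygon (the conic together with two adjacent lines). Writing $C, L_1,\dots,L_{2k}$ for the meridians and $a_i = c(C,L_i)$, the vertex relations force $a_{i+1}=-a_i$ and $c(L_i,L_{i+1})=a_i$, all equal to $\pm$ the generator of $(M_n)_2/(M_n)_3$. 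The kernel equations then reduce to the recurrence $v_{i-1}+v_{i+1}=v_C$ on the line-coordinates, which decouples into two cyclic subsystems of length $k$ on the odd- and even-indexed variables, each step being the involution $v\mapsto v_C - v$. Solving these yields $\dim\ker c = 1$ when $k$ is odd and $\dim\ker c = 3$ when $k$ is even, so $Z(M_n/(M_n)_3)$ has rank $2$ or $4$ respectively. For $n=2k>4$ both values are strictly smaller than $2k$, contradicting $M_n \cong \Z^{2k-1}\oplus F_2$; note that for $k=2$ the even count gives exactly $4=2k$, which is precisely why $n=4$ is exceptional, in accordance with Proposition \ref{prsG3G4}(b).

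The main obstacle I expect is the rank computation of $c$: keeping the sign bookkeeping of the $2k$ vertex relations consistent around the cycle (the parity of $k$ enters through whether the length-$k$ subsystems are consistent or instead force the ``half'' solution $v_i=v_C/2$), and checking that the branch relation $x_{k-1}=x_{k+2}$ together with the node relations contributes nothing further to the kernel. Once the recurrence is set up correctly, the parity dichotomy is exactly what produces the clean separation of the center's rank from $2k$ for every $n>4$.
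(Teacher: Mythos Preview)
Your approach is correct and essentially identical to the paper's: both reduce via $(M_n)_2/(M_n)_3\cong\Z$ to the single candidate $\Z^{2k-1}\oplus\FF_2$ and then rule it out by bounding the rank of $Z(M_n/(M_n)_3)$ through the alternating commutator form on the abelianization. The only difference is that the paper stops after exhibiting three independent rows of the form (giving ${\rm rank}\,c\geq 3$, hence center rank $\leq 2k-1<2k$), whereas you push the recurrence $v_{j-1}+v_{j+1}=v_C$ to its exact solution and obtain $\dim\ker c\in\{1,3\}$ according to the parity of $k$---a refinement that, as you note, also recovers why $k=2$ is the unique exceptional case.
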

\begin{proof}

 We use the notations of the fourth step of Theorem \ref{prsGneven}.

We know that $M_{2k}/M_{2k}' \cong \ZZ^{2k+1}$.
Note that $H_{2k} \cong \Z$ for $k>2$ (by the fourth step of Theorem \ref{prsGneven}), means that $M_{2k}$ cannot be isomorphic to a direct sum of the form $ K = \ZZ^r \oplus \bigoplus_{i=1}^v \FF_{m_i}$ ($r\geq 1,m_i>1$) when $v>1$ or when $v=1$ and $m_1 > 2$, since in that case, $[K,K]/[K,[K,K]]$ would not be isomorphic to $\Z$. Note that the only case when $[K,K]/[K,[K,K]]$ might be isomorphic to $\Z$ is when $v=1$ and $m_1=2$ (by Witt's formula for the  rank of the $k$-th quotient in the lower central series of the free group, see \cite{MKS}), as we indeed get for $M_4$.

Assume by contradiction that  $M_{2k} \cong \ZZ^r \oplus \FF_2$. Thus $M_{2k}/M_{2k}' \cong \ZZ^{r+2} $ which implies that $ r = 2k-1$, and
$M_{2k} \cong \ZZ^{2k-1} \oplus \FF_2$. Denoting $K_{2k} \doteq M_{2k}/[M_{2k},[M_{2k},M_{2k}]]$, we get that $Z(K_{2k}) \cong \ZZ^{2k}$, since $Z(\FF_2/[\FF_2,[\FF_2,\FF_2]]) \cong \ZZ$ (where $Z(G)$ is the center of a group $G$). We will now contradict that deduction by showing that being in the center of $K_{2k}$ is equivalent to solving a homogeneous system of $2k+1$ linear equations with $2k+1$ variables.
We will get a contradiction by computing a lower bound on the rank of this system.

Let $X  = x_1^{m_1}\cdots  x_u^{m_u}\cdot \delta \in K_{2k}$ be a general word, where $\delta \in [M_{2k},M_{2k}]$, $u = 2k+2$ is the number of geometric generators of $M_{2k}$ and $m_i \in \ZZ$.
We want to see what are the conditions such that $X \in Z(K_{2k})$, i.e. what are the conditions on the exponents $m_i$ such that for any $1 \leq s \leq u, s \neq k+2$,

\begin{equation}\label{eqnXi}
(x_1^{m_1}\cdots  x_u^{m_u}\cdot \delta) \cdot x_s = x_s \cdot (x_1^{m_1}\cdots x_u^{m_u}\cdot \delta),
\end{equation}
\noindent
when we omit the case of $s=k+2$,  since $x_{k-1} = x_{k+2}$. Therefore, from now on, we consider the generators $x_{k-1}$ and $x_{k+2}$ as being the same, and thus, consider the expression $x_1^{m_1}\cdots x_u^{m_u}$ as equal to a product of $2k+1$ terms:
$x_1^{m_1}\cdots x_{k+1}^{m_{k+1}} \cdot \widehat{x_{k+2}^{m_{k+2}}} \cdot x_{k+3}^{m_{k+3}} \cdots x_u^{m_u}$
(when the exponents $m_{k-1}$ and $m_{k+2}$ are always added and thus considered as the exponent of $x_{k-1}$).

Note that $H_{2k}  \doteq [M_{2k},M_{2k}]/[M_{2k},[M_{2k},M_{2k}]]  \subseteq Z(K_{2k})$. Note also that the system of equations (\ref{eqnXi}) is a system of equations in $K_{2k}$. Therefore, $\delta \in Z(K_{2k})$ and thus can be omitted from equations (\ref{eqnXi}). Now, since $x_ix_j = t_{i,j}x_jx_i$, and using Property \ref{propH3}(2), the above system of equations is equivalent to:
\begin{equation} \label{eqnMatrixOverZ}
t_{s,1}^{m_1}\cdots \widehat{t_{s,k+2}^{m_{k+2}}} \cdots t_{s,u}^{m_u} = e,
\end{equation}
where $1 \leq s \leq u, s \neq k+2$. However, as already noted in the fourth step of Theorem \ref{prsGneven}, for every $i,j,k,l$,
$t_{i,j} = t_{k,l}^{\pm 1}$ in $H_{2k}$. Denoting $y = t_{1,2}$, we get that the system of equations (\ref{eqnMatrixOverZ}) is in fact a system of $2k+1$ linear equations in the variables $m_i$'s:
\begin{equation} \label{eqnMatrixOverZ2}
\alpha_{s,1}{m_1}+ \cdots + \alpha_{s,k+1}{m_{k+1}} + \alpha_{s,k+3}{m_{k+3}}  + \cdots + \alpha_{s,u}{m_u} = 0,
\end{equation}
where $\alpha_{i,j} \in \ZZ$. Note that since the $t_{i,j}$'s are equal to $y^{\pm 1}$, we have that $|\alpha_{i,j}| \leq 2$.

Now, the trivial solution to this homogeneous system of equations, i.e. $m_i = 0$ for all $i$, corresponds to the word $X = \delta  \in [M_{2k},M_{2k}]$. As
$H_{2k}= \langle y \rangle \cong \Z$, as was proved in the fourth step of Theorem \ref{prsGneven}, we get that
$\langle y \rangle \subseteq Z(K_{2k})$. Therefore, in order to prove that $Z(K_{2k}) \cong \ZZ^{2k}$, we have to show that there are
$2k-1$ other non-trivial linearly independent solutions for the system of equations (\ref{eqnMatrixOverZ2}).

The system of equations (\ref{eqnMatrixOverZ2}) has $2k+1$ linear equations with $2k+1$ variables. Once we show that there are three
independent equations at this system, this would mean that rank($A$)$\geq 3$ (where $A=$($\alpha_{i,j}$) is the coefficient matrix),  thus there are less than $2k-1$ non-trivial independent solutions, and we get a contradiction.

Let us examine more closely the system of equations (\ref{eqnMatrixOverZ}) for certain indices $s$.
For $s \neq k-1$, $t_{s,i} \neq e$ only if $i = k-1$ (since there are no nodes on the conic) or  the lines numerated $s$ and $i$ intersect at a triple point (on the conic).
Thus, for $s=1$, the corresponding equation, from the system of equations (\ref{eqnMatrixOverZ2}), is:
$$
\alpha_{1,2} m_2 + \alpha_{1,2k+2} m_{2k+2} + \alpha_{1,k-1} m_{k-1} = 0
$$
(where $\alpha_{1,2}, \alpha_{1,2k+2} \neq 0$). For $s=2k+2$, the corresponding equation is:
$$
\alpha_{2k+2,1} m_1 + \alpha_{2k+2,2k+1} m_{2k+1} + \alpha_{2k+2,k-1} m_{k-1} = 0
$$
(where  $\alpha_{2k+2,1}, \alpha_{2k+2,2k+1} \neq 0$). For $s = k-1$, $t_{s,i} = e$ only for $i=k-1$. Thus, for $s = k-1$  the corresponding equation is:
$$
\sum_{\substack{i=1,\\i\neq k-1,k+2}}^{2k+2} \alpha_{k-1,i} m_i = 0
$$
(where the coefficients in the above equation are not $0$). Obviously, the last three equations are linearly independent, and this finishes the proof.
\end{proof}

\begin{remark}
\rm{
%
 $\A_{2k}$ is almost-conjugation-free, as after sending any geometric generator, that corresponds to one of the lines, to the identity element, we get that the resulting arrangement has no cycle in its graph, and thus, by Proposition \ref{prsCFless1}(2), it is conjugation-free. However, we do not know whether it is conjugation-free or not, but we conjecture that it is not conjugation-free based on the case of $\A_4$.
}
\end{remark}

\begin{remark}
\rm{
Using the result of Oka-Sakamoto \cite{OkSa}, one can generalize the above results to any CL arrangement whose graph is a disjoint union of
cycles.
}
\end{remark}

\end{document}